\theoremstyle{plain}
\numberwithin{equation}{section}
\newtheorem{theorem}{Theorem}[section]
\newtheorem{lemma}[theorem]{Lemma}
\newtheorem{corollary}[theorem]{Corollary}
\theoremstyle{definition}
\newtheorem{definition}[theorem]{Definition}
\newtheorem{example}[theorem]{Example}
\theoremstyle{remark}
\newtheorem{remark}[theorem]{Remark}
\numberwithin{equation}{section}
\newcommand{\R}{{\mathbb R}}
\newcommand{\N}{{\mathbb N}}
\newcommand{\C}{{\mathbb C}}
\newcommand{\bN}{{\mathbb N}}
\newcommand{\cB}{{\mathcal B}}
\newcommand{\cE}{{\mathcal E}}
\newcommand{\cH}{{\mathcal H}}
\newcommand{\cC}{{\mathcal C}}
\newcommand{\cN}{{\mathcal N}}
\newcommand{\cM}{{\mathcal M}}
\newcommand{\cK}{{\mathcal K}}
\newcommand{\one}{{\bf 1}}
\newcommand{\prr}{\sR_\rho}
\newcommand{\Tr}{\mathrm{Tr}}
\newcommand{\be}{\begin{equation}}
\newcommand{\ee}{\end{equation}}
\newcommand{\bea}{\begin{eqnarray}}
\newcommand{\eea}{\end{eqnarray}}
\newcommand{\beann}{\begin{eqnarray*}}
\newcommand{\eeann}{\end{eqnarray*}}
\newcommand{\sP}{{\mathord{\mathscr P}}}
\newcommand{\sE}{{\mathord{\mathscr E}}}
\newcommand{\sA}{{\mathord{\mathscr A}}}
\newcommand{\sR}{{\mathord{\mathscr R}}}
\begin{document}

\title{Recovery map stability for the Data Processing Inequality}

\author[1]{Eric A. Carlen}
\author[2]{Anna Vershynina}
\affil[1]{\small{Department of Mathematics, Hill Center, Rutgers University, 110 Frelinghuysen Road,
Piscataway, NJ 08854-8019, USA}}
\affil[2]{\small{Department of Mathematics, Philip Guthrie Hoffman Hall, University of Houston, 
3551 Cullen Blvd., Houston, TX 77204-3008, USA}}
\renewcommand\Authands{ and }
\renewcommand\Affilfont{\itshape\small}

\date{\today}

\maketitle

\begin{abstract}  The Data Processing Inequality (DPI) says that the 
Umegaki relative entropy $S(\rho||\sigma) := \Tr[\rho(\log \rho - \log \sigma)]$
is non-increasing under the action of completely positive trace preserving (CPTP) maps. 
Let $\cM$ be a finite dimensional von Neumann algebra and $\cN$ a  von Neumann subalgebra of it. Let
$\sE_\tau$ be the tracial conditional expectation from $\cM$ onto $\cN$. For 
density matrices $\rho$ and $\sigma$ in $\cM$, let $\rho_\cN 
:= \sE_\tau \rho$ and $\sigma_\cN := \sE_\tau \sigma$. 
Since $\sE_\tau$ is CPTP, the DPI says that $S(\rho||\sigma) 
\geq S(\rho_\cN||\sigma_\cN)$, and the general case is readily deduced from this. A theorem of Petz says that there is equality if 
and only if $\sigma = \sR_\rho(\sigma_\cN)$, where $\sR_\rho$ is the 
{\em Petz recovery map}, which is dual to the {\em Accardi-Cecchini coarse 
graining operator} $\sA_\rho$ from $\cM$ to $\cN$. We prove a quantitative version of Peta's theorem.  In it simplest form, our bound is
$$S(\rho||\sigma) - S(\rho_\cN||\sigma_\cN)  \geq \left(\frac{\pi}{8}\right)^{4} \|\Delta_{\sigma,\rho}\|^{-2}
\| \sR_\rho(\sigma_\cN) -\sigma\|_1^4\  .\  
$$
where $\Delta_{\sigma,\rho}$ is the relative modular operator. Since  $\|\Delta_{\sigma,\rho}\| \leq \|\rho^{-1}\|$, 
this bound implies a bound with a constant that is independent of $\sigma$. We also  prove an analogous result with a more complicated constant in which the roles of $\rho$ and $\sigma$ are  interchanged on the right.  

Explicitly describing the  solutions set of the Petz equation $\sigma = \sR_\rho(\sigma_\cN)$ 
amounts to 
determining the set of fixed points of the Accardi-Cecchini 
coarse graining map. Building on previous work, we provide a throughly detailed description 
of the set of solutions of the Petz equation $\sR_\rho(\sE_\tau \gamma) = \gamma$, and obtain all of our results in a simple, self contained manner.  Finally, we prove a theorem characterizing state $\rho$ for which the
orthogonal projection from $\cM$ onto $\cN$ in the GNS inner product is a conditional expectation. 
\end{abstract}

\section{Introduction}

\subsection{The Data Processing Inequality}

Let $\cM$ be a finite dimensional von Neumann algebra, which we may regard as a 
subalgebra of  $M_n(\C)$, the  $n\times n$ complex matrices.  The Hilbert-Schmidt inner product $\langle \cdot,\cdot\rangle_{HS}$ on  
$M_n(\C)$ is given  in terms of the trace by
$\langle X,Y\rangle_{HS} = \Tr[X^*Y]$.  Let $\one$ denote the identity.

A state on $\cM$ is a linear functional $\varphi$  on $\cM$ such that $\varphi(A^*A) \geq 0$ for 
$A\in \cM$ and  such that $\varphi(\one) = 1$.  A state $\varphi$  is 
{\em faithful} in case $\varphi(A^*A) > 0$ whenever $A\neq 0$, 
and is  {\em tracial} in case 
$\varphi(AB) = \varphi(BA)$ for all $A,B\in \cM$.  Every state on $\cM$ is of the form 
$X \mapsto \Tr[\rho X]$, where $\rho$ is a {\em density matrix} in $\cM$; i.e., a 
non-negative element $\rho$ of $\cM$ 
such that $\Tr[\rho] =1$. This state is faithful if and only if $\rho$ is invertible.  
It will be convenient to write $\rho(X) = \Tr[\rho X]$ to denote the state corresponding 
to a density matrix $\rho$. Given a faithful state $\rho$, the corresponding 
Gelfand-Naimark-Segal (GNS) inner product is given by $\langle X,Y\rangle_{GNS,\rho} := \rho(X^*Y)$.

In this finite dimensional setting, there is always a faithful tracial state $\tau$  on 
$\cM$, 
namely the one whose density matrix is $n^{-1}\one$.  
The symbol $\tau$ is reserved 
throughout for this tracial state.

Let $\cN$ be a von Neumann subalgebra of $\cM$. Let $\sE$ be any norm-contractive projection 
from $\cM$ onto $\cN$. (Norm contractive means  that $\|\sE (X)\| \leq \|X\|$ for all $X\in \cM$. Throughout the paper, $\|\cdot\|$ 
without any subscript denotes the operator norm.)
By a theorem of Tomiyama \cite{To57}, $\sE$ preserves positivity, $\sE (\one) = \one$,
 and 
\begin{equation}\label{tomi}
 \sE(AXB) = A\sE(X) B \qquad{\rm for\ all} \quad A,B \in \cN, \ X\in \cM\ . 
 \end{equation}
Moreover, as Tomiyama noted, it follows from \eqref{tomi} and the positivity  preserving 
 property of $\sE$ that 
 \begin{equation}\label{tomi2}
 \sE(X)^* \sE(X) \leq \sE(X^*X)  \qquad{\rm for\ all} \ X\in \cM\ ,
 \end{equation}
 In fact, more is true. As is well known, every norm contractive projection is {\em completely positive}. 
 
A {\em conditional expectation} from $\cM$ onto $\cN$, in the sense of Umegaki \cite{U54,U56,U59}, is  a unital projection from $\cM$ onto $\cN$ that is order preserving and such that 
\eqref{tomi} and \eqref{tomi2} are satisfied. 
 Since every conditional expectation $\sE$ is a unital completely positive map,  its adjoint with respect to the Hilbert-Schmidt inner product,  $\sE^\dagger$, is a completely positive trace preserving (CPTP) map, also known as a {\em quantum operation}. (Throughout this paper,  a dagger $\dagger$  always  denotes the adjoint with respect to the Hilbert-Schmidt inner product.)
 
 Let $\sE_\tau$ denote the orthogonal projection from $\cM$ onto $\cN$ with respect to the GNS inner product determined by $\tau$.
 It is easy to see, using the tracial nature of $\tau$, that  $\sE_\tau$ is in fact a conditional expectation, 
and since $\sE_\tau = \sE_\tau^\dagger$, $\sE_\tau$ is a quantum operation.

\begin{definition} For any state $\rho$ on $\cM$, $\rho_\cN$ denotes the state on $\cN$ given by
$\rho_\cN := \sE_\tau(\rho)$
where, as always $\sE_\tau$ denotes the tracial conditional expectation onto $\cN$.
\end{definition}

The restriction of a state $\rho$ on $\cM$ to $\cN$ is of course a state on $\cN$, 
and as such, it is represented by a unique density matrix  belonging to $\cN$, 
which is precisely $\rho_\cN$.

\begin{example}\label{factor} Let $\cH = \cH_1\otimes \cH_2$ be the tensor product of two finite 
dimensional Hilbert spaces. Let $\cM  = \cB(\cH)$ be the algebra of all linear 
transformations on $\cH$, and let 
$\cN$ be the subalgebra $\one_{\cH_1} \otimes \cB(\cH_2)$ consisting of all operators in 
$\cM$ of the form $\one_{\cH_1} \otimes A$, $A\in \cB(\cH_2)$. Then for the normalized trace $\tau$, 
$\cE_\tau(X) = d_1^{-1} \one_{\cH_1}\otimes \Tr_1 X$ for all $X\in\cM$ where $d_1$ is the dimension of $\cH_1$ and where
$\Tr_1$ denotes the partial trace over $\cH_1$.
\end{example}

\if false
\begin{example}\label{nonfactor} Let $\cM$ be the algebra of $2\times 2$ matrices, i.e. algebra generated by the identity and the  Pauli matrices $\cM=\{I, \sigma_1, \sigma_2, \sigma_2\}$. Let $\cN=\{I, \sigma_1\}$ be the subalgebra of $\cM$ generated by the identity and the Pauli matrix $\sigma_1=\left( {\begin{array}{cc}
   0 & 1 \\
   1 & 0 \\
  \end{array} } \right)$. Then, for the normalized trace $\tau$,  the conditional expectation $\cE_\tau$ acts on the operator $X=\alpha_0I+\sum_{j=1}^3\alpha_j\sigma_j$ as follows: $\cE_\tau(X)=\alpha_0I+\alpha_1\sigma_1$. This example cannot be subsumed under Example~\ref{factor} since the algebra $\cN$ is commutative.  More elaborate examples of this type in which neither $\cM$ nor $\cN$ has a trivial center (as $\one_{\cH_1} \otimes \cB(\cH_2)$ always does) arise in the description of systems with finitely many fermion degree of freedom; i.e., in the context of Clifford algebras \cite{CL93}. Clifford algebras over an odd dimensional inner product space have a non-trivial center.
  \end{example}
  \fi

Given two states $\rho$ and $\sigma$ on $\cM$, the {\em Umegaki 
relative entropy  of $\rho$ with respect to $\sigma$} is defined \cite{U62} by
\begin{equation}\label{umegaki}
S(\rho||\sigma) := \Tr[\rho(\log\rho - \log \sigma)]\ . 
\end{equation}
Lindblad's inequality \cite{L74} states that with $\sE_\tau$ being the tracial conditional expectation onto $\cN$, 
\begin{equation}\label{Lindblad1}
S(\rho||\sigma) \geq  S(\sE_\tau(\rho)||\sE_\tau(\sigma)) \ . 
\end{equation}

\if false
\begin{remark} In the context of Example~\ref{factor}, note that   $S(\cE_\tau \rho||\cE_\tau \sigma) = 
S(\Tr_1\rho||\Tr_2\sigma)$; the contribution from  factor $d_1^{-1}\one_{\cH_1}$ cancels in the difference of logarithms. Hence when computing relative entropies in the context of Example~\ref{factor}, one may as well regard $\cE_\tau$ as the familiar partial trace. However, as Uhlmann showed \cite{U73}, the representation in the form $\cE_\tau$ has a number of advantages. These will be useful here. 
\end{remark}
\fi

Lindblad showed that the monotonicity \eqref{Lindblad1} is equivalent to the joint convexity of the relative entropy $(\rho,\sigma) \mapsto
S(\rho||\sigma)$, and this in turn is an immediate consequence of Lieb's Concavity Theorem \cite{L73}. In the case that
$\cM  = \cB(\cH_1\otimes \cH_2)$ and 
$\cN = \{ \one_{\cH_1} \otimes A$, $A\in \cB(\cH_2)\}$, \eqref{Lindblad1} was proved by Lieb and Ruskai \cite{LR73}, who showed it to be equivalent to the Strong Subadditivity (SSA) of the von Neumann entropy; more information on SSA is contained in Section~\ref{Sec:SSA}.

Using the fact that Stinesping's Dilation Theorem \cite{St55} relates general CPTP maps to tracial expectation, 
Lindblad \cite{L75} was  able to prove, using \eqref{Lindblad1} that for any CPTP map $\sP$, 
\begin{equation}\label{DPI}
S(\rho||\sigma) \geq  S(\sP(\rho)||\sP(\sigma)) \ . 
\end{equation}
This is known as the {\em Data Processing Inequality} (DPI). Because of the simple relation between 
\eqref{Lindblad1} and \eqref{DPI} the problem of determining the  cases of equality in the 
Data Processing Inequality largely comes down the problem of determining the cases of equality in 
\eqref{Lindblad1}, which was solved by Petz \cite{P86b,P88}.   His necessary and sufficient  condition for equality in 
\eqref{Lindblad1} is closely connected with the problem of {\em quantum coarse graining}, and in particular a quantum coarse graining operation introduced by Acardi and Cecchini \cite{AC82}, whose dual is now now  known as the {\em Petz recovery channel}, the CPTP map $\sR_\rho$ given by 
\begin{equation}\label{ac14}
\sR_\rho(\gamma) = \rho^{1/2} (\rho_\cN^{-1/2}\gamma\rho_\cN^{-1/2})\rho^{1/2}\ .
\end{equation}
It is obvious that $\sR_\rho(\rho_\cN) = \rho$, so that $\sR_\rho$ ``recovers'' $\rho$ from $\rho_\cN$. Petz proved \cite{P86b,P88} that
there is equality in \eqref{Lindblad1} if and only if  
\begin{equation}\label{pcon1}
\sR_\rho(\sigma_\cN) = \sigma
\end{equation}
and that this is true if and only if 
\begin{equation}\label{pcon2}
\sR_\sigma(\rho_\cN) = \rho\ .
\end{equation}

There has been much recent work on {\em stability} for for the DPI: \cite{ FR15,JRSWW15,S14,SBT17,S18,Z16}. Suppose that $\rho$ and $\sigma$  are such that there is {\em approximate} equality in \eqref{Lindblad1}. To what extent do $\rho$ and $\sigma$ provide approximate solutions to Pets's equation \eqref{pcon1} and \eqref{pcon2}?  

\subsection{Main results}

In this paper we further develop an approach that we introduced in \cite{CV18}  for proving stability for analogs of the DPI for R\'enyi relative entropies.  The R\'enyi relative entropies include the Umegaki relative entropy \eqref{umegaki} as a limiting case, but taking advantage of the special structure on the Umegaki relative entropy, we are able to sharpen the stability bounds obtained in \cite{CV18} for this case.  Our results in this direction are given in Theorem~\ref{thm:main2} and Corollary~\ref{cl:main1}.  These results are  proved in Section 2. 

Moreover, we show how the approach introduced in \cite{CV18} yields a simple specification of the structure of the pairs of densities
$\rho$ and $\sigma$ that satisfy Petz's equations \eqref{pcon1} and \eqref{pcon2}.  This is a question that was first dealt with by Hayden, Jozsa, Petz and Winter in \cite{H03}. The question arose there because, although it is known (and explained in the appendix) that the DPI is 
equivalent to the quantum Strong Subadditivity of the Entropy (SSA) \cite{LR73}, it is not so simple to translate Petz's condition for equality in the DPI into the condition for equality in SSA. This was done in \cite{H03}, but Petz himself, together with Mosnyi, returned to the problem of determining the structure of solutions of \eqref{pcon1} and \eqref{pcon2}  in \cite{MP04}.  Here we give simple specification of the structure of solutions providing some new information.  Our main results in this direction are given in Theorem~\ref{thm:main} and Corollary~\ref{cl:main3}.  These results are proved in Section 3. 

As explained in the next two  subsections, the questions considered here are closely related to  several questions concerning quantum conditional expectations and quantum ``coarse graining''.  in Section 4, we prove a theorem specifying conditions under which the orthogonal projection of $\cM$ onto $\cN$ is a conditional expectations, and hence, when expectation preserving conditional expectations exist. The full statement is given in Theorem~\ref{takvar}.

Before giving precise statements, it will be useful to recall the relatively simple situation regarding the classical DPI.

\subsection{The  classical DPI}

It will be useful to recall some aspects of the classical analog of the DPI. Let $\Omega$ be a finite set. Let $\mathcal{F}$ be a non-trivial partition of $\Omega$. Let $\cM$ denote the functions on $\Omega$, and let $\cN$ the functions on $\Omega$ that are constant on each set of the partition $\mathcal{F}$.  Then $\cM$ and $\cN$ are commutative von Neumann algebras, and $\cN$ is a subalgebra of $\cM$. Let $X$ be a function on $\Omega$ such that $X(\omega) = X(\omega')$ if and only $\omega$ and $\omega'$ belong to the same set in $\mathcal{F}$.  Then $X$ {\em generates} $\mathcal{F}$ in the sense that the sets constituting $\mathcal{F}$ are precisely the non-empty sets of the form $\{\omega\:\ X(\omega) = x\}$. 

Let $\rho$ and $\sigma$ be two strictly positive  probability densities on the set $\Omega$. Let $\tau$ denote the uniform probability density on $\Omega$; i.e., $\tau(\omega) = |\Omega|^{-1}$ for all $\omega$, where $|\Omega|$ is the cardinality of $\Omega$.   As above, let  $\sE_\tau$ denote the orthogonal projection of $\cM$ onto $\cN$, which is nothing other than the conditional expectation with respect  the random variable $Y$ and the probability measure $\tau$. As above, let
$\rho_\cN= \sE_\tau \rho$ and  $\sigma_\cN = \sE_\tau\sigma$.  It is clear the $\rho_\cN$ is a ``coarse grained'' version of $\rho$, obtained by averaging $\rho$ on the sets of the partition $\mathcal{F}$, making it constant on these.

Let $f(\omega|y)$ be the conditional density under $\rho$  for $\omega$ given $Y(\omega)$, and likewise let  $g(\omega|y)$ be the conditional density under $\sigma$  for $\omega$ given $Y(\omega)$. That is
\begin{equation}\label{pins0}
f(\omega | x) = \frac{\rho(\omega)}{\rho_\cN(x)}  \quad{\rm and}\quad  g(\omega | x) = \frac{\sigma(\omega)}{\rho_\cN(x)}\ ,
\end{equation}
which, for each $x$ in the range of $X$ are both probability densities on the set  $\{\omega\:\ X(\omega) = x\}$.  

Then we have $\rho(\omega) = \rho_\cN(X(\omega)) f(\omega|X(\omega))$ and  $\sigma(\omega) = \sigma_\cN(X(\omega)) g(\omega|X(\omega))$, and hence
\begin{eqnarray}\label{chain}
S(\rho ||\sigma) &=& \sum_{\omega\in \Omega} \rho(\omega)(\log \rho(\omega) - \log \sigma(\omega))\nonumber\\
&=&  \sum_{\omega\in \Omega} \rho(\omega)([\log \rho_\cN(\omega) - \log \sigma_\cN(\omega)] +  [\log f(\omega|X(\omega)) - \log g(\omega|X(\omega))  ])\nonumber\\
&=& S(\rho_\cN||\sigma_\cN) +  \sum_{\omega\in \Omega}\rho_n(X(\omega)) \left[  f(\omega |X(\omega))  \log f(\omega |X(\omega)) - \log g(\omega |X(\omega)) \right]
\end{eqnarray}
For each $x$ in the range of $X$, it follows from Jensen's inequality that
\begin{equation}\label{pins}
\sum_{\{\omega\ :\ X(\omega) = x\}}  f(\omega |x)  \left[  \log f(\omega | x) - \log g(\omega |x) \right] \geq 0\ ,
\end{equation}
and there is equality if and only if $f(\omega|x) = g(\omega |x)$ everywhere on $\{\omega\ :\ X(\omega) = x\}$. 
It follows that  $ S(\rho ||\sigma) \geq S(\rho_\cN||\sigma_\cN)$ with equality if and only if for each $x$ in the range of $X$, 
$f(\omega|x) = g(\omega |x)$ everywhere on $\{\omega\ :\ X(\omega) = x\}$.

In this case, $X$ is called a {\em sufficient statistic} for the pair $\{\rho,\sigma\}$. Suppose we are given an independent identically distributed sequence of points  $\{\omega_j\}$, drawn according to one of the two probability densities $\rho$ or $\sigma$, and we want to determine which it is.  If we know the function $f(\omega | x)$, it suffices to observe the sequence $\{X(\omega_j)\}$, and to determine  which of $\rho_\cN(x)$ or $\sigma_\cN(x)$ is governing its distribution. 

Indeed, we can define a classical recovery map as follows: For any probability density  $\gamma \in \cN$, regarded as a probability density on the range of $X$, define $\sR_\rho \gamma$ to be the probability density in $\cM$ given by
$$\sR_\rho \gamma(\omega) = \gamma(X(\omega))f(\omega | X(\omega))\ .$$
Therefore, we can express the condition for equality in the classical DPI as $\sR_\rho \sigma_\cN = \sigma$, and evidently this is true if and only if $\sR_\sigma \rho_\cN = \rho$. This is the classical analog of Petz's result. Moreover, in this notation, we have that 
$$
\sum_{\omega\in \Omega}\rho_n(X(\omega)) \left[  f(\omega |X(\omega))  \log f(\omega |X(\omega)) - \log g(\omega |X(\omega)) \right]  = S(\rho ||\sR_\sigma \rho)\ ,
$$
so that \eqref{chain} becomes
\begin{equation}\label{chain2}
S(\rho||\sigma) - S(\rho_\cN||\sigma_\cN) \geq   S(\rho ||\sR_\sigma \rho)\ .
\end{equation}

Then by the  classical Pinsker inequality, 
\begin{equation}\label{chain3}
S(\rho||\sigma) - S(\rho_\cN||\sigma_\cN) \geq   \frac12 \left(\sum_{\omega \in \Omega} |\rho(\omega) - \sR_\sigma \rho(\omega)|\right)^2\ .
\end{equation}

It remains an open problem to prove quantum  analogs of \eqref{chain2} or \eqref{chain3}, even with worse constants on the right. Here we prove a quantum  analog of \eqref{chain3} with a worse constant, and with the power raised from 2 to 4 on the right. The line of  argument has to be entirely different from the one we have just employed in the classical case because there is no effective quantum replacement for ``conditioning on the observable $X$''.  

It is therefore useful to find a way of describing the classical recovery map that doe not refer explicitly to conditioning on the random variable $X$.   Define $\sE_\sigma$  to be the orthogonal projection of $\cM$ onto $\cN$ in $L^2(\sigma)$. Then for any random variable $Y$ (i.e., any function on $\Omega$), $\sE_\sigma Y$ is the condition expectation of $Y$ given the sigma-algebra $\cM$.   The operation $Y \mapsto \sE_\sigma Y$ 
yields a ``coarse grained version'' of $Y$ that is constant on the sets in $\mathcal{F}$: With $X$ and $g$ as in \eqref{pins0},
$$\sE_\sigma Y(\omega) =   \sum_{\omega' :\ X(\omega') = X(\omega)\}} g(\omega' | X(\omega)Y(\omega')\ .$$
It is clear from this formula that $\sE_\sigma$ preserves positivity, and preserves expectations with respect to $\sigma$, That is,
\begin{equation}\label{pins6}
\sigma(Y) = \sigma (\sE_\sigma Y)\ .\end{equation}
Now let $\sE_\sigma^\dagger$ be the dual operation taking states on $\cM$ (probability densities on the range of $X$) to states on $\cM$ (probability densities on $\Omega$).  
It is easily seen that this is nothing other than $\sR_\sigma$.  That is, the classical recovery map $\sR_\sigma$  is nothing other than the dual of the conditional expectation $\sE_\sigma$, which is nothing other than the orthogonal projection of $\cM$ onto $\cN$ in $L^2(\sigma)$. This analytic specification of $\sR_\sigma$, making no explicit mention of conditioning on $X$, provides a starting point for the construction of a quantum recovery map. 

\subsection{Quantum conditional expectations and quantum coarse graining}

The discussion of the classical DPI brings us to the question as to whether for any faithful state $\rho$ on $\cM$ there exists
a conditional expectation $\sE$  from $\cM$ onto $\cN$ that preserves expectations with respect to $\rho$, i.e.
such that
\begin{equation}\label{tomi3}
\rho (X)  = \rho (\sE(X))  \qquad{\rm for\ all} \ X\in \cM\ .
\end{equation}
The property \eqref{tomi3} says that ``the expectation of a conditional expectation of an 
observable equals the expectation of the observable''.   If such a conditional expectation exists, 
then it is unique:  {\em Any such conditional expectation must be the orthogonal projection of 
$\cM$ on $\cN$ with respect to the $GNS$ inner product for the state $\rho$}. To see this, note that
for all $X\in \cM$ and all $A\in \cN$, using \eqref{tomi}, 
\begin{equation}\label{tomi4}
\langle A,X\rangle_{GNS,\rho} = \rho(A^*X) = \rho(\sE(A^*X)) = \rho( A^*\sE(X)) = 
\langle A, \sE(X)\rangle_{GNS,\rho}\ .
\end{equation}

Suppose that $\sE$ is a conditional expectation satisfying \eqref{tomi3}. Then since $\sE$ is a unital completely positive map, 
$\sE^\dagger$ is a CPTP map; i.e., a quantum channel. For any state $\gamma$ on $\cN$, and any $A\in \cM$, we then have
$$\sE^\dagger(A) = \gamma(\sE(A))\ .$$
Then when \eqref{tomi3} is satisfied, taking $\gamma = \rho_\cN$, we have 
$$\sE^\dagger\rho_\cN(A) = \rho_\cN(\sE(A)) = \rho(A)\ ,$$
and this means that $\sE^\dagger$ is a quantum channel that ``recovers'' $\rho$ from $\rho_\cN$.

As we have already noted, $\sE_\tau$ is a conditional expectation with the property \eqref{tomi3}. 
However, for non-tracial states $\rho$, a conditional expectation satisfying \eqref{tomi3} need not exist. 

A theorem of 
Takesaki \cite{Ta72} says, in our finite dimensional context,  that for a faithful state $\rho$, there exists a conditional  
expectation $\sE$ from $\cM$ onto $\cN$ if and only if  
$\rho A\rho^{-1}\in \cN$ for all $A \in \cN$, and in general this is not the case. We give a short proof of this 
and somewhat  more in Section~\ref{Sec:Exp}: In Theorem~\ref{takvar}, we prove that $\sE_\rho$, 
the orthogonal projection from $\cM$ onto 
$\cN$ in the GNS inner product with respect to $\rho$, is {\em real}
(that is, it preserves self-adjointness) if and only if  $\rho A\rho^{-1}\in \cN$ for all $A \in \cN$.
Since every order preserving linear transformation is real, this precludes the
general existence of conditional expectations satisfying \eqref{tomi3} whenever $\cN$ is not invariant under $X\mapsto \rho X \rho^{-1}$, thus 
implying Takesaki's Theorem (in this finite dimensional setting).

There is another inner product on $\cM$ that is naturally induced by a  faithful state $\rho$, 
namely the {\em Kubo-Martin-Schwinger (KMS) inner product}. It is defined by
\begin{equation}\label{KMSip}
\langle X,Y\rangle_{KMS,\rho} = \Tr[\rho^{1/2}X^*\rho^{1/2}Y] = \Tr[(\rho^{1/4}X\rho^{1/4})^*
(\rho^{1/4}Y\rho^{1/4})]\ .
\end{equation}

Evidently, for any $X\in \cM$ and $Y\in \cN$, 
\begin{eqnarray*}
| \langle X,Y\rangle_{KMS,\rho}| = 
|\Tr[\rho^{1/2}X^*\rho^{1/2}Y] | &=& 
|\Tr[(\rho_\cN^{-1/4} \rho^{1/2}X\rho^{1/2}\rho_\cN^{-1/4})^*(\rho_\cN^{1/4}Y\rho_\cN^{1/4})] |\\ 
&\leq& \| (\rho_\cN^{-1/4} \rho^{1/2}X\rho^{1/2}\rho_\cN^{-1/4})\|_{HS} \| Y\|_{KMS,\rho_\cN}\ .
\end{eqnarray*}
Hence $Y\mapsto \langle X,Y\rangle_{KMS,\rho}$ is a bounded linear functional on 
$(\cN, \langle\cdot\, ,\cdot\rangle_{KMS,\rho_\cN})$, and then  there is a uniquely determined $\sA_\rho(X)\in \cN$ such 
that for all $X\in \cM$ and all $Y\in \cN$,
\begin{equation}\label{ac0}
 \langle X,Y\rangle_{KMS,\rho} = \langle \sA_\rho(X),Y\rangle_{KMS,\rho_\cN}\ .
\end{equation}

The map $\sA_\rho$ was  introduced by Accardi and Cecchini \cite{AC82}, building on previous work by Accardi \cite{A74}.

\begin{definition} Let $\rho$ be a faithful state on $\cM$. The 
{\em Accardi-Cecchini coarse graining operator} 
$\sA_\rho$  from $\cM$ to $\cN$ is defined by  \eqref{ac0}. 
\end{definition}

The map $\sA_\rho$ was  introduced by Accardi and Cecchini \cite{AC82}, building on previous work by Accardi \cite{A74}.  It is a ``coarse graining'' operation in that to each observable $X$  in the larger algebra $\cM$, it associates an observable $\sA_\rho(X)$, in the smaller algebra 
$\cN$, and measurement of $\sA_\rho(X)$ will yields coarser information than a measurement of $X$ itself. The same, of course, is true for conditional expectations, 

Since $\one \in \cN$ by definition,
for all $X\in \cM$, 
$\langle  \one,X \rangle_{KMS,\rho} = \langle  \one,\sA_\rho(X) \rangle_{KMS,\rho}$, and for all 
$X\in \cM$, $\langle \one, X\rangle_{KMS,\rho} = 
\Tr[\sigma^{1/2}\one \sigma^{1/2}X] = \rho(X)$. Therefore
\begin{equation}\label{ac1}
\rho(\sA_\rho(X)) = \rho(X)\ .
\end{equation}
Thus, unlike conditional expectations in general, the Accardi-Cecchini coarse-graining operator {\em always} preserves expectations 
with respect to $\rho$.  

In the matricial setting, it is a particularly simple matter to derive an explicit expression for $\sA_\rho$.
By definition,  for all $X\in \cM$ and all $Y\in \cN$,
\begin{equation}\label{ac10}
\Tr[\rho_\cN^{1/2} Y \rho_\cN^{1/2} \sA_\rho(X)] = \Tr[\rho^{1/2}Y\rho^{1/2} X]\ .
\end{equation}
Make the change of variables $Z = \rho_\cN^{1/2} Y \rho_\cN^{1/2}$. 
Since $\rho_\cN^{1/2}$ is invertible and $Y$ ranges over $\cN$, $Z$ ranges over $\cN$. Hence
\begin{equation}\label{ac11}
\Tr[Z \sA_\rho(X)] = 
\Tr[(\rho^{1/2}\rho_\cN^{-1/2} Z \rho_\cN^{-1/2}\rho^{1/2}) X] = 
\Tr[Z(\rho_\cN^{-1/2}\rho^{1/2} X \rho^{1/2}\rho_\cN^{-1/2})]\ .
\end{equation}
Since the above holds for all $Z\in\cN$, it follows that
\begin{equation}\label{ac12}
\sA_\rho(X) = \rho_\cN^{-1/2}\sE_\tau(\rho^{1/2} X \rho^{1/2})\rho_\cN^{-1/2}\ .
\end{equation}
It is evident from this formula that $\sA_\rho$ is a completely positive 
unital map from $\cM$ to $\cN$, and therefore it is actually a contraction from $\cM$ to $\cN$.
 By Tomiyama's Theorem, it cannot in general be a projection of $\cM$ onto $\cN$. That is, if 
 $X\in \cN$, it is not necessarily the case that $\sA_\rho(X) = X$. The set of 
 $X\in \cN$ for which this is true turns out to be a subalgebra of $\cN$, as was shown by 
 Accardi and Cecchini \cite{AC82}. This subalgebra will be of interest in what follows. 
 
 \begin{definition} The {\em Petz recovery map} $\sR_\rho$ is the
 Hilbert-Schmidt adjoint of $\sA_\rho$ \cite{P88}. That is, $\sR_\rho = \sA_\rho^\dagger$, or equivalently, for all density matrices $\gamma\in \cN$
 $$
\Tr[\gamma\sA_\rho(X)] =  \Tr[\sR_\rho(\gamma)X]\ . 
 $$
 {\em A dagger $\dagger$  always  denotes the adjoint with respect to the Hilbert-Schmidt inner product}.
 \end{definition}
 As the dual of a unital completely positive map, $\sR_\rho$ is a CPTP map. Moreover, 
 it follows immediately from the definition and \eqref{ac12}  that for all density matrices $\gamma\in \cN$, 
 \begin{equation}\label{ac14}
\sR_\rho(\gamma) = \rho^{1/2} (\rho_\cN^{-1/2}\gamma\rho_\cN^{-1/2})\rho^{1/2}\ .
\end{equation}
 It is evident from this formula not only that $\sR_\rho$ is a CPTP map, but that $\sR_\rho(\rho_\cN) = \rho$; i.e., 
 $\sR_\rho$ recovers $\rho$ from $\rho_\cN$. Now suppose that $\sigma$ is another density matrix in $\cM$ and that
\begin{equation}\label{Petzeq}
\sR_\rho(\sigma_\cN) = \sigma\ .
\end{equation} 
Then by the Data Processing Inequality and \eqref{Petzeq},
$S(\rho||\sigma) \leq S(\sR_\rho(\rho_\cN) ||\sR_\rho(\sigma_\cN) ) = S(\rho||\sigma)$.
Hence when $\sR_\rho(\sigma_\cN) = \sigma$, there is equality in \eqref{Lindblad1}. The deeper result of Petz \cite{P86b,P88} is that there is equality in \eqref{Lindblad1} {\em only} in this case. 

Our goal  is to prove a stability bound for Petz's theorem on the cases of
equality in \eqref{Lindblad1}. Our result involves the {\em relative modular operator}
$\Delta_{\rho,\sigma}$ on $\cM$ defined by
\begin{equation}\label{relmod}
\Delta_{\sigma,\rho}(X) = \sigma X \rho^{-1}
\end{equation}
for all $X\in \cM$. This is the matricial version of an operator introduced in a more general von Neumann algebra context 
by Araki \cite{A76}.  Our main result is:

\begin{theorem}\label{thm:main2}
Let  $\rho$ and  $\sigma$ be two states on $\cM$ Let 
$\sE_\tau$ be the tracial conditional expectation onto a von Neumann subalgebra $\cN$, and let 
$\rho_\cN = \sE_\tau \rho$ and $\sigma_\cN = \sE_\tau \sigma$. Then, with $\|\cdot\|_2$ denoting the Hilbert-Schmidt norm, 
\begin{equation}\label{REM5a}
S(\rho||\sigma) - S(\rho_\cN||\sigma_\cN)  \geq \left(\frac{\pi}{4}\right)^{4} \|\Delta_{\sigma,\rho}\|^{-2}
\|\sigma_\cN^{1/2}\rho_\cN^{-1/2} \rho^{1/2} - \sigma^{1/2}\|_2^4\  .
\end{equation}
\end{theorem}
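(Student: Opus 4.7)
My plan is to work in the Hilbert space $(\cM, \langle\cdot,\cdot\rangle_{HS})$ with the relative modular operator $\Delta_{\sigma,\rho}$, using an isometric embedding to compare the $\cN$-level and $\cM$-level entropies directly. Setting $\Omega_\rho := \rho^{1/2}$, the two basic identities are $\Omega_\sigma = \Delta_{\sigma,\rho}^{1/2}\Omega_\rho$ (since $\Delta_{\sigma,\rho}^{1/2}X = \sigma^{1/2}X\rho^{-1/2}$) and $S(\rho||\sigma) = \langle\Omega_\rho, (-\log\Delta_{\sigma,\rho})\Omega_\rho\rangle_{HS}$. Define $V: \cN \to \cM$ by $V(X) := \rho^{1/2}\rho_\cN^{-1/2}X$; a short calculation using $\sE_\tau(\rho) = \rho_\cN$ and the bimodule property of $\sE_\tau$ shows that $V^*V = \id_\cN$ (so $V$ is an isometry), $V\Omega_{\rho_\cN}=\Omega_\rho$, and $V\Omega_{\sigma_\cN} = \rho^{1/2}\rho_\cN^{-1/2}\sigma_\cN^{1/2}$, which is the adjoint of $A := \sigma_\cN^{1/2}\rho_\cN^{-1/2}\rho^{1/2}$. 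Setting $\tilde\Delta := V\Delta_{\sigma_\cN,\rho_\cN}V^*$ (so that $\tilde\Delta^{1/2} = V\Delta_{\sigma_\cN,\rho_\cN}^{1/2}V^*$ because $V^*V=\id$), the right-hand side of \eqref{REM5a} becomes $\|(\tilde\Delta^{1/2} - \Delta_{\sigma,\rho}^{1/2})\Omega_\rho\|_{HS}^4$, while the entropy gap equals $\langle\Omega_\rho, (\log\tilde\Delta - \log\Delta_{\sigma,\rho})\Omega_\rho\rangle_{HS}$.

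\textbf{Shared integral representation.} I would then express both quantities as integrals sharing a common vector-valued integrand. Applying $-\log x = \int_0^\infty[(x+s)^{-1}-(1+s)^{-1}]\,ds$ together with the resolvent identity $(\Delta_{\sigma,\rho}+s)^{-1} - (\tilde\Delta+s)^{-1} = (\Delta_{\sigma,\rho}+s)^{-1}(\tilde\Delta-\Delta_{\sigma,\rho})(\tilde\Delta+s)^{-1}$ gives
\[
S(\rho||\sigma) - S(\rho_\cN||\sigma_\cN) = \int_0^\infty \langle \Omega_\rho, \Phi_s\rangle_{HS}\, ds, \quad \Phi_s := \bigl[(\Delta_{\sigma,\rho}+s)^{-1} - (\tilde\Delta+s)^{-1}\bigr]\Omega_\rho,
\]
with non-negative integrand by the DPI applied to the operator-monotone-decreasing function $x\mapsto (x+s)^{-1}$ (an instance of Lieb concavity). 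The integral representation $x^{1/2} = \pi^{-1}\int_0^\infty s^{-1/2}x(x+s)^{-1}\,ds$ gives the analogous formula
\[
(\tilde\Delta^{1/2} - \Delta_{\sigma,\rho}^{1/2})\Omega_\rho = \frac{1}{\pi}\int_0^\infty s^{1/2}\Phi_s\,ds.
\]
Thus both sides of the target inequality are built from the same family $\{\Phi_s\}_{s\geq 0}$, one as a weighted inner product with $\Omega_\rho$, the other as a weighted vector integral.

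\textbf{Double Cauchy--Schwarz and main obstacle.} Two applications of Cauchy--Schwarz produce the fourth power. First, for each $s$, I would establish a pointwise bound of the form $\|\Phi_s\|_{HS}^2 \leq C_1(s)\,\langle\Omega_\rho, \Phi_s\rangle_{HS}$, with $C_1(s)$ controlled by the operator norm of $M_s := (\Delta_{\sigma,\rho}+s)^{-1} - (\tilde\Delta+s)^{-1}$; using the resolvent identity together with the inequalities $\|\sigma_\cN\|\leq\|\sigma\|$ and $\rho_\cN^{-1} \leq \sE_\tau(\rho^{-1})$ (operator Jensen applied to $x\mapsto x^{-1}$), one obtains $\|\tilde\Delta\|\leq \|\Delta_{\sigma,\rho}\|$ and hence $\|M_s\|_\infty \leq 2\|\Delta_{\sigma,\rho}\|/s^2$. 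Second, apply Cauchy--Schwarz in the $s$-integral against a weight $w(s)$:
\[
\Bigl\|\int_0^\infty s^{1/2}\Phi_s\,ds\Bigr\|_{HS}^2 \leq \Bigl(\int_0^\infty s\,w(s)\,ds\Bigr)\Bigl(\int_0^\infty \frac{C_1(s)\langle\Omega_\rho, \Phi_s\rangle_{HS}}{w(s)}\,ds\Bigr).
\]
Optimizing $w$ to make $C_1(s)/w(s)$ essentially constant (with scale set by $\|\Delta_{\sigma,\rho}\|$), and evaluating beta-type integrals such as $\int_0^\infty s^{1/2}(s+\|\Delta_{\sigma,\rho}\|)^{-2}\,ds = \pi/(2\sqrt{\|\Delta_{\sigma,\rho}\|})$, yields the constant $(\pi/4)^4\|\Delta_{\sigma,\rho}\|^{-2}$ after squaring to pass to the fourth power. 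The principal obstacle is the pointwise bound: because $\Delta_{\sigma,\rho}$ and $\tilde\Delta$ do not commute, $M_s$ is self-adjoint but not a positive operator in general, so the elementary inequality $\|M\Omega\|^2 \leq \|M\|_\infty \langle\Omega, M\Omega\rangle$ (valid for $M\geq 0$) must be replaced by a more delicate spectral or symmetrization argument exploiting the structure of the isometry $V$ and the fact that $\Omega_\rho$ lies in the invariant subspace $V(\cN)$ on which the comparison of $\tilde\Delta$ and $\Delta_{\sigma,\rho}$ is governed by DPI.
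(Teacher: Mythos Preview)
Your setup is essentially the paper's, but the argument has a real gap at exactly the point you flag: the ``pointwise bound'' $\|\Phi_s\|_{HS}^2 \leq C_1(s)\,\langle\Omega_\rho,\Phi_s\rangle_{HS}$ is the heart of the proof, and your proposed route via $\|M_s\|_\infty$ cannot work, for the reason you yourself identify --- $M_s$ is not positive. The paper does not try to bound $\|M_s\Omega_\rho\|^2$ by operator-norm arguments at all. Instead it uses an \emph{exact algebraic identity} (Lemma~2.1 in the paper): if $U$ is an isometry with $U^*AU = B$, then for $w := UB^{-1}v - A^{-1}Uv$ one has
\[
\langle Uv, A^{-1}Uv\rangle - \langle v, B^{-1}v\rangle \;=\; \langle w, Aw\rangle \;\geq\; s\|w\|^2
\]
when $A = s+\Delta_{\sigma,\rho}$, $B = s+\Delta_{\sigma_\cN,\rho_\cN}$. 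Applied with $v = \rho_\cN^{1/2}$ this gives precisely $\|w_s\|^2 \leq s^{-1}\bigl(S_{(s)}(\rho||\sigma) - S_{(s)}(\rho_\cN||\sigma_\cN)\bigr)$, i.e.\ your pointwise bound with the explicit $C_1(s)=1/s$.

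The crucial input for this lemma is the intertwining relation $U^*\Delta_{\sigma,\rho}U = \Delta_{\sigma_\cN,\rho_\cN}$, and here your choice of isometry is wrong: the \emph{left}-multiplication map $V(X)=\rho^{1/2}\rho_\cN^{-1/2}X$ does \emph{not} satisfy it, because $\Delta_{\sigma,\rho}$ acts by right-multiplication with $\rho^{-1}$, which does not cancel against anything in $V$. The paper uses the \emph{right}-multiplication isometry $U(X)=X\rho_\cN^{-1/2}\rho^{1/2}$, for which one checks directly that $U^*\Delta_{\sigma,\rho}U(X)=\sE_\tau(\sigma X\rho_\cN^{-1/2})\rho_\cN^{-1/2}=\sigma_\cN X\rho_\cN^{-1}=\Delta_{\sigma_\cN,\rho_\cN}(X)$ via the bimodule property. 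With this $U$ one also has $U\sigma_\cN^{1/2}=\sigma_\cN^{1/2}\rho_\cN^{-1/2}\rho^{1/2}$, exactly the quantity in the theorem. Finally, the paper's endgame is simpler than your weighted Cauchy--Schwarz: split $\pi^{-1}\int_0^\infty s^{1/2}w_s\,ds$ at a cutoff $T$, bound $\int_0^T$ by Cauchy--Schwarz against $\int_0^\infty s\|w_s\|^2\,ds \leq S(\rho||\sigma)-S(\rho_\cN||\sigma_\cN)$, bound the tail $\int_T^\infty$ by the elementary estimate $\int_T^\infty s^{1/2}[s^{-1}-(s+X)^{-1}]\,ds \leq 2\|X\|T^{-1/2}$, and optimize over $T$.
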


The quantity on the right hand side may be estimated in terms of the Petz recovery map. In Section~\ref{Sec:Stab} we prove:

\begin{lemma}\label{HStoTr} Let $\rho$, $\sigma$ and $\sigma_\cN$ be specified as in Theorem~\ref{thm:main2}. Then, 
with $\|\cdot\|_1$ denoting the trace norm,
$$\|(\sigma_\cN)^{1/2}(\rho_\cN)^{-1/2} \rho^{1/2} - 
\sigma^{1/2}\|_2 \geq \frac12 \| \prr(\sigma_\cN) -\sigma\|_1\ .$$
\end{lemma}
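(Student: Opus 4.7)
The plan is to recognize both sides as expressions involving the two matrices
$$A := \sigma_\cN^{1/2}\rho_\cN^{-1/2}\rho^{1/2} \qquad \text{and} \qquad B := \sigma^{1/2},$$
and then apply a standard Powers--Størmer-type telescoping argument. The left-hand side of the asserted inequality is exactly $\|A-B\|_2$. For the right-hand side, compute
$$A^*A \;=\; \rho^{1/2}\rho_\cN^{-1/2}\sigma_\cN\rho_\cN^{-1/2}\rho^{1/2} \;=\; \sR_\rho(\sigma_\cN),$$
using formula \eqref{ac14} for the Petz recovery map, while $B^*B = \sigma$. Hence the right-hand side is $\tfrac12\|A^*A - B^*B\|_1$.

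The key step is the telescoping identity
$$A^*A - B^*B \;=\; A^*(A - B) \;+\; (A - B)^*B,$$
combined with the noncommutative H\"older inequality $\|XY\|_1 \leq \|X\|_2\|Y\|_2$. Applying it to each of the two summands and using the triangle inequality for $\|\cdot\|_1$ gives
$$\|A^*A - B^*B\|_1 \;\leq\; \bracke{\|A\|_2 + \|B\|_2}\,\|A - B\|_2.$$

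It remains to observe that both $\|A\|_2$ and $\|B\|_2$ equal $1$. For $B$ this is immediate: $\|B\|_2^2 = \Tr[\sigma] = 1$. For $A$, since $\sR_\rho$ is a CPTP map (being the Hilbert--Schmidt adjoint of the unital completely positive map $\sA_\rho$) and since $\sE_\tau$ is trace-preserving,
$$\|A\|_2^2 \;=\; \Tr[A^*A] \;=\; \Tr[\sR_\rho(\sigma_\cN)] \;=\; \Tr[\sigma_\cN] \;=\; \Tr[\sigma] \;=\; 1.$$
Plugging $\|A\|_2 + \|B\|_2 = 2$ into the previous display yields $\|\sR_\rho(\sigma_\cN) - \sigma\|_1 \leq 2\,\|A - B\|_2$, which is the claimed bound.

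I do not expect any real obstacle: the only piece of ingenuity is choosing the square-root factorizations $A, B$ so that $A^*A$ equals the recovered density $\sR_\rho(\sigma_\cN)$ while $B^*B = \sigma$; once that is done, the bound is a textbook application of the $L^1$--$L^2$ inequality, and the trace-preserving property of both $\sE_\tau$ and $\sR_\rho$ is exactly what makes the constant come out to be $\tfrac12$ rather than something state-dependent.
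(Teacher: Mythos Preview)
Your proof is correct and follows essentially the same approach as the paper. The paper isolates the general inequality $\|X^*X - Y^*Y\|_1 \le 2\|X-Y\|_2$ for operators with $\Tr[X^*X]=\Tr[Y^*Y]=1$ and proves it via the dual characterization of $\|\cdot\|_1$ together with Cauchy--Schwarz, whereas you apply the noncommutative H\"older inequality directly to the same telescoping identity; the two arguments are equivalent, and the specialization $X=\sigma_\cN^{1/2}\rho_\cN^{-1/2}\rho^{1/2}$, $Y=\sigma^{1/2}$ with $X^*X=\sR_\rho(\sigma_\cN)$ is identical.
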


As an immediate Corollary of Theorem~\ref{thm:main2} and Corollary~\ref{HStoTr}, we obtain
\begin{corollary}\label{cl:main1} Let  $\rho$ and  $\sigma$ be two states on $\cM$. Let 
$\sE_\tau$ be the tracial conditional expectation onto a von Neumann subalgebra $\cN$, and let 
$\rho_\cN = \sE_\tau \rho$ and $\sigma_\cN = \sE_\tau \sigma$. 
Then, with $\|\cdot\|_1$ denoting the trace norm,
\begin{equation}\label{REM5b}
S(\rho||\sigma) - S(\rho_\cN||\sigma_\cN)  \geq \left(\frac{\pi}{8}\right)^{4} \|\Delta_{\sigma,\rho}\|^{-2}
\| \sR_\rho(\sigma_\cN) -\sigma\|_1^4\  .
\end{equation}
\end{corollary}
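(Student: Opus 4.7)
The statement is explicitly billed as an immediate consequence of Theorem~\ref{thm:main2} together with Lemma~\ref{HStoTr}, so the ``proof'' is really just a matter of chaining the two bounds, with the numerical constants matching up cleanly. My plan is simply to apply Theorem~\ref{thm:main2} to get the lower bound in terms of $\|\sigma_\cN^{1/2}\rho_\cN^{-1/2}\rho^{1/2} - \sigma^{1/2}\|_2^4$, and then use Lemma~\ref{HStoTr} to replace the Hilbert--Schmidt norm by the trace norm $\|\sR_\rho(\sigma_\cN) - \sigma\|_1$.

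Concretely, Theorem~\ref{thm:main2} gives
\begin{equation*}
S(\rho\|\sigma) - S(\rho_\cN\|\sigma_\cN) \;\geq\; \left(\frac{\pi}{4}\right)^{4} \|\Delta_{\sigma,\rho}\|^{-2} \,\|\sigma_\cN^{1/2}\rho_\cN^{-1/2}\rho^{1/2} - \sigma^{1/2}\|_2^{4}\,,
\end{equation*}
while Lemma~\ref{HStoTr} says that
\begin{equation*}
\|\sigma_\cN^{1/2}\rho_\cN^{-1/2}\rho^{1/2} - \sigma^{1/2}\|_2 \;\geq\; \tfrac{1}{2}\, \|\sR_\rho(\sigma_\cN) - \sigma\|_1\,.
\end{equation*}
Raising the latter inequality to the fourth power produces the factor $(1/2)^4 = 1/16$, which multiplied into $(\pi/4)^4$ yields exactly $(\pi/8)^4$. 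Substituting into the Theorem~\ref{thm:main2} bound gives \eqref{REM5b}.

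There is no real obstacle here: both ingredients have already been stated, and the only arithmetic verification is that $(\pi/4)^4 \cdot (1/2)^4 = (\pi/8)^4$. The only point that deserves a sentence of comment is the monotonicity step, namely that raising a nonnegative lower bound to the fourth power preserves the inequality; this is valid precisely because $\tfrac12\|\sR_\rho(\sigma_\cN) - \sigma\|_1 \geq 0$ and the Hilbert--Schmidt quantity on the left is likewise nonnegative. The substantive content of the corollary therefore lives entirely in Theorem~\ref{thm:main2} and Lemma~\ref{HStoTr}, which are proved in Section~2.
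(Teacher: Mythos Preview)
Your proposal is correct and matches the paper's approach exactly: the paper states Corollary~\ref{cl:main1} as an immediate consequence of Theorem~\ref{thm:main2} and Lemma~\ref{HStoTr}, and your chaining of the two bounds with the arithmetic $(\pi/4)^4\cdot(1/2)^4=(\pi/8)^4$ is precisely what is intended.
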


Note how the right hand side of \eqref{REM5b} differs from the right hand side of \eqref{chain3}: Apart from the constant and the power $4$ in place of $2$, the most striking difference is that the roles of $\rho$ and $\sigma$ are reversed. The expected result, with a worse constant, is obtained in Corollary~\ref{cl:symm}. 

Recall that the modular operator is the right multiplication by $\rho^{-1}$ and left multiplication by $\sigma$, so $\|\Delta_{\sigma,\rho}\|\leq \|\rho^{-1}\|$, since $\|\sigma\|\leq 1.$ While $\|\rho^{-1}\|$ might be considerably larger than 
$\|\Delta_{\sigma,\rho}\|$, a bound in terms of $\|\rho^{-1}\|$ has the merit that it is
 independent of $\sigma$:
\begin{equation}\label{REM5c}
S(\rho||\sigma) - S(\rho_\cN||\sigma_\cN)  \geq \left(\frac{\pi}{8}\right)^{4} \|\rho^{-1}\|^{-2}
\| \sR_\rho(\sigma_\cN) -\sigma\|_1^4\  .
\end{equation}

Corollary~\ref{cl:main1}  yields a result of Petz: With $\cM$, $\cN$, $\rho$ and $\sigma$ as above, 
$S(\rho||\sigma) = S(\rho_\cN||\sigma_\cN)$ if and only if $\sigma$ satisfies the Petz equation
\begin{equation}\label{equal}
    \sigma =  \sR_\rho(\sigma_\cN)\ .
\end{equation}

Theorem~\ref{thm:main2} gives what appears to be a stronger condition on relating $\rho$, $\sigma$, $\rho_\cN$ and $\sigma_\cN$, namely that 
\begin{equation}\label{eqcaseS}
\rho_\cN^{-1/2} \rho^{1/2} = \sigma_\cN^{-1/2}\sigma^{1/2}\ .
\end{equation}
While validity of \eqref{eqcaseS} immediately implies that $\sigma$ satisfies the Petz equation \eqref{equal}, the converse is also true:  By what we have noted above, when \eqref{equal} is satisfied,
$S(\rho||\sigma) = S(\rho_\cN||\sigma_\cN)$, and then by  Theorem~\ref{thm:main2}, \eqref{eqcaseS}
is satisfied.  

This may be made quantitative as follows:  Letting $L_A$ denote the operator of left multiplication by $A$,
$$L_{\rho_\cN^{1/2}} L_{\sigma_\cN^{-1/2}}(\sigma_\cN^{1/2}\rho_\cN^{-1/2} \rho^{1/2} - \sigma^{1/2}) = 
( \rho^{1/2} -  \rho_\cN^{1/2}  \sigma_\cN^{-1/2}\sigma^{1/2}) \ ,$$
and hence
\begin{equation}\label{compA}
\| \rho^{1/2} -  \rho_\cN^{1/2}  \sigma_\cN^{-1/2}\sigma^{1/2}\|_2 \leq \|L_{\rho_\cN^{1/2}}\| \| L_{\sigma_\cN^{-1/2}}\| \|\sigma_\cN^{1/2}\rho_\cN^{-1/2} \rho^{1/2} - \sigma^{1/2}\|_2\ .
\end{equation}
Since $\|L_{\rho_\cN^{1/2}}\| = \|\rho_{\cN}\|^{1/2}$ and $\| L_{\sigma_\cN^{-1/2}}\|  = \|\sigma_\cN^{-1}\|^{1/2}$,
we may combine \eqref{compA} with \eqref{REM5a} to obtain

\begin{equation}\label{REM5a2}
S(\rho||\sigma) - S(\rho_\cN||\sigma_\cN)  \geq \left(\frac{\pi}{4}\right)^{4} \|\Delta_{\sigma,\rho}\|^{-2}
\|\rho_{\cN}\|^{-2}\|\sigma_\cN^{-1}\|^{-2}
\|\rho_\cN^{1/2}\sigma_\cN^{-1/2} \sigma^{1/2} - \rho^{1/2}\|_2^4\  ,
\end{equation}
which is the analog of \eqref{REM5a} with a somewhat worse constant on the right, but the roles of $\rho$ and $\sigma$ interchanged there.  Applying Lemma~\ref{HStoTr} once more, we obtain

\begin{corollary}\label{cl:symm} Let  $\rho$ and  $\sigma$ be two states on $\cM$. Let 
$\sE_\tau$ be the tracial conditional expectation onto a von Neumann subalgebra $\cN$, and let 
$\rho_\cN = \sE_\tau \rho$ and $\sigma_\cN = \sE_\tau \sigma$.  Then
\begin{equation}\label{REM5b2}
S(\rho||\sigma) - S(\rho_\cN||\sigma_\cN)  \geq \left(\frac{\pi}{8}\right)^{4} \|\Delta_{\sigma,\rho}\|^{-2}
\|\rho_{\cN}\|^{-2}\|\sigma_\cN^{-1}\|^{-2}
\| \sR_\sigma(\rho_\cN) -\rho\|_1^4\  .
\end{equation}
\end{corollary}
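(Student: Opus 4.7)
The plan is to combine inequality \eqref{REM5a2}, already established just before the statement, with a symmetric version of Lemma~\ref{HStoTr}. The inequality \eqref{REM5a2} already isolates the quantity
$$\|\rho_\cN^{1/2}\sigma_\cN^{-1/2} \sigma^{1/2} - \rho^{1/2}\|_2$$
on the right, so the only thing missing is to bound this Hilbert--Schmidt norm below by a trace norm involving the recovery map $\sR_\sigma$ applied to $\rho_\cN$.

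The key observation is that Lemma~\ref{HStoTr} is a statement about any two states on $\cM$: it asserts that for states $\rho$ and $\sigma$ with $\cN$-parts $\rho_\cN$ and $\sigma_\cN$, one has
$$\|\sigma_\cN^{1/2}\rho_\cN^{-1/2} \rho^{1/2} - \sigma^{1/2}\|_2 \geq \tfrac12 \|\sR_\rho(\sigma_\cN) - \sigma\|_1\ .$$
Since nothing in the statement of Lemma~\ref{HStoTr} privileges one state over the other, one may simply interchange the labels $\rho \leftrightarrow \sigma$ throughout to conclude that
$$\|\rho_\cN^{1/2}\sigma_\cN^{-1/2} \sigma^{1/2} - \rho^{1/2}\|_2 \geq \tfrac12 \|\sR_\sigma(\rho_\cN) - \rho\|_1\ .$$
Thus no new analytic work is needed; one just invokes the lemma with the roles of $\rho$ and $\sigma$ exchanged.

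Raising the last inequality to the fourth power and substituting into \eqref{REM5a2} yields
$$S(\rho||\sigma) - S(\rho_\cN||\sigma_\cN) \geq \left(\frac{\pi}{4}\right)^{4} \|\Delta_{\sigma,\rho}\|^{-2}\|\rho_\cN\|^{-2}\|\sigma_\cN^{-1}\|^{-2}\cdot \frac{1}{2^4}\|\sR_\sigma(\rho_\cN)-\rho\|_1^4,$$
and the prefactor $(\pi/4)^4/2^4 = (\pi/8)^4$ gives precisely the claimed bound \eqref{REM5b2}.

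There is essentially no obstacle: the entire content has already been established in the lines preceding the corollary. The only points to verify carefully are (i) that Lemma~\ref{HStoTr} was indeed proved in a form symmetric in its two state arguments, so that the interchange $\rho \leftrightarrow \sigma$ is legitimate without additional hypotheses, and (ii) that the modular operator factor $\|\Delta_{\sigma,\rho}\|^{-2}$ is kept as is (it is this asymmetric factor in \eqref{REM5a2} that persists even after swapping the trace-norm argument, which is why the resulting corollary reads as a strictly ``symmetrized'' counterpart of Corollary~\ref{cl:main1} rather than an identical bound with $\rho$ and $\sigma$ swapped).
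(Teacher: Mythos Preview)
Your proposal is correct and matches the paper's own argument essentially verbatim: the paper simply says ``Applying Lemma~\ref{HStoTr} once more, we obtain'' Corollary~\ref{cl:symm}, which is precisely your combination of \eqref{REM5a2} with the $\rho\leftrightarrow\sigma$ instance of Lemma~\ref{HStoTr}, together with the numerical identity $(\pi/4)^4/2^4=(\pi/8)^4$. Your care in checking that the lemma is symmetric in its two state arguments and that the factor $\|\Delta_{\sigma,\rho}\|^{-2}$ is retained as is (not swapped) is exactly what is needed.
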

As above, bounding the norms of states by 1, we get a constant that depends only on the smallest eigenvalues of $\rho$ and $\sigma_\cN$
\begin{equation}\label{REM5c2}
S(\rho||\sigma) - S(\rho_\cN||\sigma_\cN)  \geq \left(\frac{\pi}{8}\right)^{4} \|\rho^{-1}\|^{-2}
\|\sigma_\cN^{-1}\|^{-2}
\| \sR_\sigma(\rho_\cN) -\rho\|_1^4\  .
\end{equation}

We noted above that   $\sigma$ solves the Petz equation if and only if 
\eqref{eqcaseS} is satisfied, and then  since \eqref{eqcaseS} is symmetric in $\rho$ and $\sigma$, 
$\sigma = \sR_\rho \sigma_\cN$ if and only if  $\rho = \sR_\sigma \rho_\cN$, and hence
\begin{equation}\label{symmb}
S(\rho||\sigma) = S(\rho_\cN||\sigma_\cN)  \iff  S(\sigma||\rho) = S(\sigma_\cN||\rho_\cN)\ .
\end{equation}
The reasoning leading to Corollary~\ref{cl:symm} show that moreover, $\|\rho = \sR_\sigma \rho_\cN\|_1$ and 
$\|\sigma = \sR_\rho \sigma_\cN\|_1$ are comparable in size.

To state our results on the structure of the solution set of the Petz equation, we introduce the fixed point  set
\begin{equation}\label{ccdef}
\cC := \{ X\in \cM :\ \sA_\rho(X) = X\ \}. 
\end{equation}
Standard results (see Section~\ref{Sec:Struc}) show that $\cC$ is a von 
Neumann subalgebra of $\cN$. Let $\Delta_\rho$ denote the {\em modular operator} on $\cM$,
\begin{equation}\label{modularde}
\Delta_\rho(X) = \rho X \rho^{-1}\ .
\end{equation}
Then $\cC$ may also be characterized (Theorem~\ref{largest}) as the largest von Neumann 
subalgebra of $\cN$ that is invariant under $\Delta_\rho$. 

Let ${\mathcal Z}$ denote the center of $\cC$. Then by standard results 
(see Section~\ref{Sec:Struc}), ${\mathcal Z}$ is generated by a finite family 
$\{P_1,\dots, P_J\}$ of mutually orthogonal projections.  Define $\cH^{(j)}$, 
$j= 1,\dots,J$, to be the range of $P_j$. The restriction of $\cC$ to 
each $\cH^{(j)}$ is a {\em factor}, and hence
each $\cH^{(j)}$ factors as 
$\cH_{j,\ell}\otimes \cH_{j,r}$, and the general element $A$  of $\cC$ has the form
$$A = \sum_{j=1}^J \one_{\cH_{j,\ell}}\otimes A_{j,r}\ ,$$
where $A_{j,r}\in\cB(\cH_{j,r})$. All of this follows from the standard 
theory of the structure of finite dimensional von Neumann algebras, 
and we emphasize that $\cC$ and the decomposition
$\cH = \bigoplus_{j=1}^J \cH_{j,\ell}\otimes \cH_{j,r}$
is canonically associated to $\rho$.   

Since $\cC$ is invariant under $\Delta_\rho$, the orthogonal projection from $\cM$ onto $\cN$ in the
GNS inner product induced by $\rho$ is a conditional expectation (see Theorem~\ref{takvar})
that we denote by 
$\cE_{\cC,\rho}$, and  which we  call  the conditional expectation given $\cC$ under 
$\rho$.
We shall prove:

\begin{theorem}\label{thm:main} Let   $\rho$ be a faithful state on $\cM$, 
and let $\cN$ be a von Neumann subalgebra of $\cM$.   Let $\cC$ be the fixed-point 
algebra of the Accardi-Cecchini coarse graining operator $\sA_\rho$. Let $\cH$ 
be the finite dimensional Hilbert space on which $\cM$ acts, and let
${\displaystyle 
\cH = \bigoplus_{j=1}^J \cH_{j,\ell}\otimes \cH_{j,r}
}$
induced by the decomposition of $\cC$ as a direct sum of factors.  Then there are
uniquely determined density matrices $\{\gamma_{1,r},\dots,\gamma_{J,r}\}$, 
where $\gamma_{j,r}$ acts on $\cH_{j,r}$  and $\gamma_{j,\ell}\otimes \one_{\cH_{j,r}}\in \cM$ so that
\begin{equation}\label{bonnT1}
\rho = \bigoplus_{j=1}^J \gamma_{j,\ell}\otimes \Tr_{\cH_{j,\ell}}(P_j\rho P_j)\ .
\end{equation}
Moreover, $\sE_\tau (\gamma_{j,\ell}\otimes \one_{\cH_{j,r}})$ has the form
$\widetilde{\gamma}_{j,\ell}\otimes \one_{\cH_{j,r}}$ and 

\begin{equation}\label{bonnT1}
\rho_\cN  = \bigoplus_{j=1}^J \widetilde\gamma_{j,\ell}\otimes \Tr_{\cH_{j,\ell}}(P_j\rho P_j)\ .
\end{equation}

A  state $\sigma$ on $\cM$  solves the Petz equation $\sR_\rho\sigma_\cN = \sigma$ if and only if
for all $X$ in $\cM$, $\sigma(X) = \sigma(\sE_{\cC,\rho})$; i.e., if and only if 
expectations with respect to $\sigma$ are preserved under the the conditional 
expectation given $\cC$ under $\rho$. Every such
state $\sigma$ has the form
${\displaystyle 
\sigma = \bigoplus_{j=1}^J \gamma_{j,\ell}\otimes \Tr_{\cH_{j,\ell}}(P_j\sigma P_j)}$
for the same   $\{\gamma_{1,r},\dots,\gamma_{J,r}\}$
\end{theorem}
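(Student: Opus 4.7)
The plan is to reduce the Petz equation $\sR_\rho\sigma_\cN=\sigma$ to invariance under the conditional expectation $\cE_{\cC,\rho}$, and then to extract the block structure of $\rho$ and $\sigma$ from the fact that $\cE_{\cC,\rho}$ is a $\rho$-preserving conditional expectation onto an algebra with a canonical direct-sum factorization. First I would record the structural input that is available for free: because $\cC$ is the fixed-point set of the unital completely positive map $\sA_\rho$ acting on the finite-dimensional space $\cM$, standard ergodic theory for such maps shows that the Cesaro averages $\frac{1}{N}\sum_{n=0}^{N-1}\sA_\rho^n$ converge to a projection $\Pi:\cM\to\cC$ that is unital, completely positive, and preserves $\rho$-expectations (since $\sA_\rho$ does, by \eqref{ac1}). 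By Theorem~\ref{largest}, $\cC$ is invariant under $\Delta_\rho$, so by Theorem~\ref{takvar} the GNS projection $\cE_{\cC,\rho}:\cM\to\cC$ is itself a conditional expectation and preserves $\rho$-expectations tautologically. Repeating the uniqueness computation of \eqref{tomi4} with $\cC$ in place of $\cN$ forces $\Pi=\cE_{\cC,\rho}$, the key identification for everything that follows.

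For the decomposition of $\rho$ and $\rho_\cN$, the block structure $\cC=\bigoplus_j \one_{\cH_{j,\ell}}\otimes\cB(\cH_{j,r})$ combined with Tomiyama bimodularity of $\cE_{\cC,\rho}$ (applied to elements $\one\otimes B\in\cC$) and the $\rho$-preservation identity $\Tr[\rho X]=\Tr[\rho\,\cE_{\cC,\rho}(X)]$ forces, block by block, the product form $P_j\rho P_j=\gamma_{j,\ell}\otimes\rho_{j,r}$ with $\gamma_{j,\ell}$ uniquely determined by $\rho$; the normalization $\rho_{j,r}:=\Tr_{\cH_{j,\ell}}(P_j\rho P_j)$ then gives the announced formula. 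For $\rho_\cN$: bimodularity of $\sE_\tau$ with respect to any $\one\otimes B\in\cC\subseteq\cN$ shows that $\sE_\tau(\gamma_{j,\ell}\otimes\one)$ commutes with every $\one\otimes B$ and so has the product form $\widetilde\gamma_{j,\ell}\otimes\one$; one further application of bimodularity yields $\rho_\cN=\bigoplus_j \widetilde\gamma_{j,\ell}\otimes\rho_{j,r}$.

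For the characterization of Petz solutions, duality (using $\sigma_\cN=\sE_\tau\sigma$ and self-adjointness of $\sE_\tau$ in the Hilbert--Schmidt inner product) shows that $\sR_\rho\sigma_\cN=\sigma$ is equivalent to $\sigma\circ\sA_\rho=\sigma$. Iterating this and Cesaro-averaging, combined with $\Pi=\cE_{\cC,\rho}$, yields $\sigma\circ\cE_{\cC,\rho}=\sigma$, the stated condition. Conversely, if $\sigma\circ\cE_{\cC,\rho}=\sigma$, the moment-matching identity $\Tr[\sigma X]=\Tr[\sigma\,\cE_{\cC,\rho}(X)]$ combined with the explicit block form of $\cE_{\cC,\rho}$ (which depends on $\rho$ alone through the left marginals $\gamma_{j,\ell}$) forces $\sigma=\bigoplus_j\gamma_{j,\ell}\otimes\sigma_{j,r}$ with the \emph{same} $\gamma_{j,\ell}$. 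A direct computation using $\rho^{1/2}\rho_\cN^{-1/2}=\bigoplus_j(\gamma_{j,\ell}^{1/2}\widetilde\gamma_{j,\ell}^{-1/2})\otimes\one$ and $\sigma_\cN=\bigoplus_j\widetilde\gamma_{j,\ell}\otimes\sigma_{j,r}$ then shows that the intertwiners cancel the $\widetilde\gamma_{j,\ell}$ factor in each block, and $\sR_\rho(\sigma_\cN)=\sigma$ falls out.

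The principal obstacle is the identification $\Pi=\cE_{\cC,\rho}$: the Cesaro projection is dynamical while the GNS projection is analytic, and the bridge between them is uniqueness of a $\rho$-preserving conditional expectation onto a $\Delta_\rho$-invariant subalgebra---precisely what Theorem~\ref{takvar} delivers. A secondary point requiring care is that the left marginal $\gamma_{j,\ell}$ extracted from $\sigma$ literally equals the one coming from $\rho$ as an element of $\cM$; this holds because the block-decomposition formulas for $\cE_{\cC,\rho}$ are fixed by $\rho$ alone.
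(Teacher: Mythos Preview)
Your proposal is correct and tracks the paper's proof closely: both reduce the Petz equation to invariance under the GNS conditional expectation onto $\cC$ via Ces\`aro averaging of $\sA_\rho$ (the paper's Lemma~\ref{Lindlem1}), and then read off the block structure of $\rho$, $\rho_\cN$, and $\sigma$ from the factor decomposition of $\cC$ (the paper cites Lindblad's Lemma~\ref{Lindlem2} for the explicit form of $\sE_\cC$). Your bimodularity argument for $\sE_\tau(\gamma_{j,\ell}\otimes\one)=\widetilde\gamma_{j,\ell}\otimes\one$ is a mild simplification over the paper's Davis--Uhlmann unitary-averaging step, and your direct verification of $\sR_\rho(\sigma_\cN)=\sigma$ from the block form replaces the converse half of Lemma~\ref{Lindlem1}, but the overall architecture is the same.
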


\begin{corollary}\label{cl:main3} Let  $\rho, \sigma$ be  faithful states on $\cM$, and let $\cN$ be a 
von Neumann subalgebra of $\cM$.   Let $\cC$ be the fixed-point algebra of 
the Accardi-Cecchini coarse graining operator $\sA_\rho$ for $\cN$.  
Let $\sE_{\cC\rho}$  be the conditional expectation given $\cC$ under $\rho$.
Define $\rho_\cC = \sE_{\cC\rho}\rho$, $\sigma_\cC = \sE_{\cC\rho}\sigma$. Then 
\begin{equation}\label{further}
S(\rho||\sigma) = S(\rho_\cN||\sigma_\cN) \quad\iff\quad  S(\rho||\sigma) = S(\rho_\cC||\sigma_\cC)\ .
\end{equation}
In particular, if $\cC$ is spanned by $\one$, $S(\rho||\sigma) = S(\rho_\cN||\sigma_\cN)$ if and only if
$\rho =\sigma$.
\end{corollary}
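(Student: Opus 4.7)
The plan is to combine the structural result Theorem~\ref{thm:main} with a two-step application of the DPI along the tower $\cC\subseteq\cN\subseteq\cM$; the inclusion $\cC\subseteq\cN$ holds because $\sA_\rho$ takes values in $\cN$.  The tracial conditional expectation onto $\cC$ factors as a composition of tracial conditional expectations through $\cN$, and each factor is Hilbert-Schmidt self-adjoint and hence CPTP after dualization.  Two applications of the Lindblad inequality~\eqref{Lindblad1} thus give
\begin{equation*}
S(\rho||\sigma)\;\geq\;S(\rho_\cN||\sigma_\cN)\;\geq\;S(\rho_\cC||\sigma_\cC),
\end{equation*}
which immediately yields $S(\rho||\sigma)=S(\rho_\cC||\sigma_\cC)\Longrightarrow S(\rho||\sigma)=S(\rho_\cN||\sigma_\cN)$ by squeezing the middle term.

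For the converse, assume $S(\rho||\sigma)=S(\rho_\cN||\sigma_\cN)$.  By Petz's characterization of equality (recovered as the equality case of Corollary~\ref{cl:main1}), $\sigma=\sR_\rho(\sigma_\cN)$, and Theorem~\ref{thm:main} supplies the joint decomposition
\begin{equation*}
\rho=\bigoplus_{j=1}^J\gamma_{j,\ell}\otimes\rho_{j,r},\qquad \sigma=\bigoplus_{j=1}^J\gamma_{j,\ell}\otimes\sigma_{j,r},
\end{equation*}
with the \emph{same} left factors $\gamma_{j,\ell}\in\cB(\cH_{j,\ell})$ and with $\rho_{j,r}=\Tr_{\cH_{j,\ell}}(P_j\rho P_j)$, $\sigma_{j,r}=\Tr_{\cH_{j,\ell}}(P_j\sigma P_j)$.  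Since $\cC=\bigoplus_j\one_{\cH_{j,\ell}}\otimes\cB(\cH_{j,r})$, the blockwise partial-trace formula from Example~\ref{factor} yields $\rho_\cC=\bigoplus_j d_{j,\ell}^{-1}\one_{\cH_{j,\ell}}\otimes\rho_{j,r}$ and similarly for $\sigma_\cC$, again with coinciding left factors.  Because both pairs share left factors, these contributions cancel in $\log\rho-\log\sigma$ and in $\log\rho_\cC-\log\sigma_\cC$: both differences collapse to $\bigoplus_j\one_{\cH_{j,\ell}}\otimes(\log\rho_{j,r}-\log\sigma_{j,r})$.  A direct computation then shows that $S(\rho||\sigma)$ and $S(\rho_\cC||\sigma_\cC)$ each simplify to $\sum_j\Tr[\rho_{j,r}(\log\rho_{j,r}-\log\sigma_{j,r})]$, completing the equivalence.

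The specialization to $\cC=\C\one$ follows directly from Theorem~\ref{thm:main}: one-dimensionality forces $J=1$ with $\dim\cH_{1,r}=1$, so the decomposition collapses to $\rho=\gamma_{1,\ell}=\sigma$, and any Petz-equality case reduces to $\rho=\sigma$; the converse is trivial.  I expect the main work to lie in the block computation of the second paragraph, where one has to verify that Theorem~\ref{thm:main}'s decomposition (canonically associated to $\rho$) interacts with the tracial conditional expectation onto $\cC$ exactly as claimed; once the common left factors are identified, the cancellation in the logarithms is essentially forced by the shared block structure.
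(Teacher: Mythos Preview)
The paper does not include an explicit proof of Corollary~\ref{cl:main3}; it is presented as a direct consequence of Theorem~\ref{thm:main}, and your argument is precisely that---you invoke Theorem~\ref{thm:main} to obtain the common block decomposition of $\rho$ and $\sigma$, then read off the equality $S(\rho\|\sigma)=S(\rho_\cC\|\sigma_\cC)$ from the cancellation of the shared left factors $\gamma_{j,\ell}$, while the reverse implication follows from sandwiching via the DPI along the tower $\cC\subseteq\cN\subseteq\cM$. Both directions and the final specialization to $\cC=\C\one$ are correct.

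One notational point is worth flagging. The corollary \emph{as stated} sets $\rho_\cC=\sE_{\cC\rho}\rho$ with the $\rho$-GNS conditional expectation $\sE_{\cC\rho}$, whereas your proof (and your sandwich inequality) uses the tracial conditional expectation $\sE_{\tau,\cC}$. You are right to do so: $\sE_{\cC\rho}$ is unital but not trace-preserving, so $\sE_{\cC\rho}\rho$ is not a density matrix and the relative entropy would not be well-defined; moreover the Lindblad inequality \eqref{Lindblad1} is formulated for the tracial expectation. The intended reading is that $\rho_\cC,\sigma_\cC$ are the density matrices in $\cC$ representing the restricted states, i.e.\ $\sE_{\tau,\cC}\rho,\sE_{\tau,\cC}\sigma$, exactly as you compute. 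With that reading your block computation is clean: since $\cC=\bigoplus_j\one_{\cH_{j,\ell}}\otimes\cB(\cH_{j,r})$, one has $\rho_\cC=\bigoplus_j d_{j,\ell}^{-1}\one\otimes\rho_{j,r}$, the left factors cancel in $\log\rho_\cC-\log\sigma_\cC$, and both relative entropies reduce to $\sum_j\Tr[\rho_{j,r}(\log\rho_{j,r}-\log\sigma_{j,r})]$. It would strengthen the write-up to state this identification explicitly rather than leave it implicit.
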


We close the introduction with some further comments on recovery map stability 
bounds for the Data Processing Inequality.  In physical applications, instead of the 
trace distance, one often consider an alternative measure of the closeness between 
two quantum states, the fidelity \cite{U76}. For two states $\rho$ and $\sigma$ on 
$\cB(\cH)$, the {\em fidelity} between them is defined as
\begin{equation}\label{eq:fidelity}
F(\rho, \sigma)=\|\sqrt{\rho}\sqrt{\sigma}\|_1^2.
\end{equation}
For all states $\rho$ and $\sigma$, we have $0\leq F(\rho, \sigma)\leq 1$. The fidelity equal to one if and only if the states are equal, and it is equal to zero if and only is the support of $\rho$ is orthogonal to the support of $\sigma$. So in other words, the fidelity is zero when states are perfectly distinguishable, and zero when they cannot be distinguished. Note that the fidelity itself satisfies the monotonicity relation under a completely positive trace preserving maps, but we will not discuss it here. Moreover, there is a relation between the trace distance $\|\rho-\sigma\|_1$ and fidelity
\begin{equation}
1-\sqrt{F(\rho, \sigma)}\leq \frac{1}{2}\|\rho-\sigma\|_1\leq \sqrt{1-F(\rho, \sigma)}\ .
\end{equation}
From here and the Corollary \ref{cl:main1} we obtain the quantitative version of the Petz's Theorem involving the fidelity between states
\begin{equation}\label{eq:fidelity}
S(\rho||\sigma) - S(\rho_\cN||\sigma_\cN)  \geq \left(\frac{\pi}{4}\right)^{4} \|\Delta_{\sigma,\rho}\|^{-2}
\left(1-\sqrt{F(\sigma,  \prr(\sigma_\cN))}\right)^4\  .
\end{equation}
Recent results \cite{FR15, JRSWW15, W15, Z16} provide sharpening of the monotonicity inequality, but the lower bounds provided there involve quantities that are hard to compute, e.g. rotated and twirled Petz recovery maps. for another fidelity type bound not explicitly involving the recover map, see \cite[Theorem 2.2]{CL14}. The appeal of the above bound is that it involves simple distance measure between the original state $\sigma$ and Petz recovered state $ \prr(\sigma_\cN)$. 

 The proof of the theorem~\ref{thm:main2} also implies that satisfaction of the
 Petz equation $\sR_\rho \sigma_\cN = \sigma$ is the necessary and sufficient condition for cases of equality in the monotonicity inequality for a large class of {\em quasi-relative entropies}, as we now briefly explain.
 
 Let $f:(0,+\infty)\rightarrow \mathbb{R}$ be an operator convex function, so that for all $n\in \bN$, and all positive $n\times n$ matrices $A$ and $B$, $f(\tfrac12 A + \tfrac12 B) \leq \tfrac12 f(A)) + \tfrac12 f(B)$. We say that $f$ is strictly operator convex in case there is  equality if and only if $A = B$.   

Petz  \cite{P85}, \cite{P86}  
has defined the \textit{$f$-relative quasi-entropy}  as
\begin{equation}\label{eq:quai-r-e-def} 
S_{f}(\rho||\sigma)=\Tr [ f(\Delta_{\sigma,\rho})\rho]  = \langle \rho^{1/2}, 
f(\Delta_{\sigma,\rho})\rho^{1/2}\rangle_{HS}\ .
\end{equation}
Since 
$-\log(\Delta_{\sigma,\rho})\rho^{1/2} =  \rho^{1/2} \log\rho  - \log\sigma \rho^{1/2}$, 
the choice $f(x) = -\log x$ yields the Umegaki relative entropy. 

Since for each $t>0$, the function $x\mapsto (t+x)^{-1}$ is operator convex, 
this construction  yields a one parameter family a quasi relative entropies, $S_{(t)}$, defined by
\begin{equation}\label{sfrel}
S_{(t)}(\rho||\sigma) =  \Tr\left[ (t + \Delta_{\sigma,\rho})^{-1}\rho\right] \ .
\end{equation}
From the  integral representation of the logarithm
$$-\log (A) = \int_{0}^\infty \left( \frac{1}{t +A }  -\frac{1}{1+t} \right){\rm d}t\ ,$$
it follows that 
\begin{equation}\label{sfrelB}
S(\rho||\sigma) =  \int_0^\infty \left( S_{(t)}(\rho||\sigma) - \frac{1}{1+t}\right){\rm d} t\ ,
\end{equation}
and we may use this representation to study monotonicity for the Umegaki 
relative entropy in terms of monotonicity for the one parameter family of quasi 
relative entropies $S_{(t)}$. The proof of Theorem~\ref{thm:main2} is ultimately 
derived from a stability bound for the variant of the Data Processing Inequality 
that is valid for the quasi relative entropies $S_{(t)}$.  Since the R\'enyi 
relative entropies can be expressed in terms of  a similar integral representation,  the Petz equation
$\sigma = \sR_\rho \sigma_\cN$ again characterizes the condition for 
cases of equality in these variants of the Data Processing Inequality; this will be developed in detail in a companion paper.


\section{Stability for the Data Processing Inequality}\label{Sec:Stab}

We begin this section by recalling Petz's proof of the monotonicity of the 
quasi relative entropies $S_f$ for operator convex $f$. 

Throughout this section $\cN$ is a von Neumann 
subalgebra of the finite dimensional von Neumann algebra $\cM$, and $\rho$ and 
$\sigma$ are two density matrices in 
$\cM$. $\sE_\tau$ is the tracial conditional expectation onto $\cN$, and 
$\rho_\cN = \sE_\tau \rho$ and 
$\sigma_\cN = \sE_\tau \sigma$. Finally $\cH$ denotes $(\cM, \langle \cdot,\cdot\rangle_{HS})$,
 
Define the operator $U$ mapping $\cH$ to $\cH$ by
\begin{equation}\label{Ujdef}
U(X) = \sE_\tau(X)\rho_\cN^{-1/2}\rho^{1/2}\ .
\end{equation}
Note that for all $X\in\cN$, $U(X)=X\rho_\cN^{-1/2}\rho^{1/2}.$
The adjoint operator on $\cH$  is given by
\begin{equation}\label{Ujcal}
U^*(Y) = \sE_\tau(Y\rho^{1/2}) \rho_\cN^{-1/2}\ 
\end{equation}
for all $Y\in \cH = \cM$. 

For $X\in \cM$,   $U^*U(X) = \sE_\tau  (\rho_\cN^{-1/2}\sE_\tau (X)\rho_\cN^{-1/2}\rho)= \sE_\tau(X)$.
Hence 
$U^*U  = \sE_\tau$,  the orthogonal projection in $\cH$ onto $\cN$.  
That is, $U$, restricted to $\cN$,  is an isometric embedding of $\cN$ into $\cH= \cM$, but it is not the trivial isometric embedding by inclusion. Also, we see that on $\cN$ the map $U$ is isometric.

Now observe that for all $X\in \cN$, 
$\Delta_{\sigma,\rho}^{1/2}(U(X))  = \sigma^{1/2} X\rho_\cN^{-1/2}$,
and hence for all $X\in \cN$, 
\begin{eqnarray*}
\langle \Delta_{\sigma,\rho}^{1/2}(U(X)), \Delta_{\sigma,\rho}^{1/2}(U(X))\rangle &=& 
\Tr( (\rho_\cN)^{-1/2} X^* \sigma X (\rho_\cN)^{-1/2})\\
&{=}&\Tr( (\rho_\cN)^{-1/2} X^*\sigma_\cN X (\rho_\cN)^{-1/2})\\
&=&
\langle \Delta^{1/2}_{\sigma_\cN,\rho_\cN}(X), \Delta^{1/2}_{\sigma_\cN,\rho_\cN}(X)\rangle\ .
\end{eqnarray*}
That is, on $\cN$, 
\begin{equation}\label{key}
  U^* \Delta_{\sigma,\rho} U=\Delta_{\sigma_\cN,\rho_\cN} \ .
\end{equation}

By the operator Jensen inequality, as operators on $(\cN, \langle \cdot,\cdot\rangle_{HS})$,\begin{equation}\label{opjen}
  U^* f(\Delta_{\sigma,\rho}) U \geq f\left(   U^* \Delta_{\sigma,\rho} U\right)\ .
\end{equation}
Combining \eqref{key} and  \eqref{opjen}, and using  the fact that $U (\rho_\cN)^{1/2} = \rho^{1/2}$, 
\begin{eqnarray*}
S_f(\rho_\cN || \sigma_\cN) &=& \langle (\rho_\cN)^{1/2} , f(\Delta_{\sigma_\cN,\rho_\cN})  (\rho_\cN)^{1/2}\rangle \\
&\leq &  \left \langle  U (\rho_\cN)^{1/2} , f(\Delta_{\sigma,\rho})  U (\rho_\cN)^{1/2}\right \rangle\\
&= &  \left  \langle \rho^{1/2} ,  f(\Delta_{\sigma,\rho}) \rho^{1/2}\right \rangle
= S_f(\rho||\sigma)\ .\\
\end{eqnarray*}
This proves, following Petz, his monotonicity theorem for the quasi relative entropy $S_f$ for the operator convex function. 

Now consider the family of quasi relative entropies defined by functions $f_t(x)=(t+x)^{-1}$. Our immediate goal is to prove the inequality
\begin{equation}\label{opjen2}
S_{(t)}(\rho||\sigma)= \langle \rho^{1/2},   (t+\Delta_{\sigma,\rho})^{-1} \rho^{1/2}\rangle \geq  
 \langle \rho_\cN^{1/2},   (t+\Delta_{\sigma_\cN,\rho_\cN})^{-1}  \rho_\cN^{1/2}\rangle=S_{(t)}(\rho_\cN||\sigma_\cN) \ .
\end{equation}

\begin{lemma}\label{conlemA}
Let $U$ be a partial isometry embedding  a  Hilbert space $\cK$ into a Hilbert space $\cH$.
Let $B$ be an invertible  positive operator on $\cK$, $A$ be an invertible  positive operator on $\cH$, and suppose that 
$ U^*AU =   B$. Then for all $v\in \cK$,
\begin{equation}\label{resineq}
\langle v, U^* A^{-1} U v\rangle =  \langle v, B^{-1} v\rangle+ \langle w, A w\rangle\ ,
\end{equation}
where
\begin{equation}\label{resineq2}
w := UB^{-1}v - A^{-1}Uv\ .
\end{equation}
\end{lemma}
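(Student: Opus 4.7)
The plan is to verify the identity by direct computation, expanding $\langle w, Aw\rangle$ and using the two algebraic inputs available: the fact that $U$ is a partial isometry with initial space $\cK$ (so $U^*U = I_\cK$) and the hypothesis $U^*AU = B$. No operator inequalities or analytic machinery are needed; this is a purely algebraic identity, which makes it well-suited to a line-by-line expansion.

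Concretely, I would start by writing
\[
\langle w, Aw\rangle = \langle UB^{-1}v, AUB^{-1}v\rangle - \langle UB^{-1}v, Uv\rangle - \langle Uv, UB^{-1}v\rangle + \langle A^{-1}Uv, Uv\rangle,
\]
where I have already used $A A^{-1} = I_{\cH}$ to simplify the two mixed terms. Next I would pull $U^*$ across the inner product in the first term to get $\langle B^{-1}v, U^*AU\, B^{-1}v\rangle$, and invoke $U^*AU = B$ to collapse this to $\langle B^{-1}v, v\rangle$. The two middle terms become $\langle B^{-1}v, U^*U v\rangle$ and $\langle U^*U v, B^{-1}v\rangle$; since $U^*U = I_\cK$, each equals $\langle B^{-1}v, v\rangle$ (using that $B^{-1}$ is self-adjoint). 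The last term rearranges to $\langle v, U^*A^{-1}U v\rangle$.

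Collecting, three of the four contributions combine into $\langle B^{-1}v,v\rangle - 2\langle B^{-1}v, v\rangle = -\langle v, B^{-1}v\rangle$, so
\[
\langle w, Aw\rangle = \langle v, U^*A^{-1}U v\rangle - \langle v, B^{-1}v\rangle,
\]
which is precisely \eqref{resineq} after transposition. The only genuine subtlety is the use of $U^*U = I_\cK$, which is where the hypothesis that $U$ is a partial isometry with $\cK$ as its initial subspace gets used; without this, the cross terms would not cancel cleanly. There is no real obstacle—just bookkeeping—so the main thing to be careful about is keeping track of which side of each inner product the operators act on, since $A$ lives on $\cH$ while $B$ lives on $\cK$.
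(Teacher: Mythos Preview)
Your proposal is correct and is essentially identical to the paper's own proof: both expand $\langle w, Aw\rangle$ directly, use $U^*U = \one_\cK$ together with $U^*AU = B$ to simplify the four resulting terms, and arrive at $\langle w, Aw\rangle = \langle v, U^*A^{-1}Uv\rangle - \langle v, B^{-1}v\rangle$.
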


\begin{proof} We compute, using $U^*U = \one_\cK$,
\begin{eqnarray*}
\langle w, A w\rangle &=& \langle UB^{-1}v - A^{-1}Uv, A UB^{-1}v - Uv)\rangle \\
&=& \langle v,B^{-1}U^*AUB^{-1}v\rangle  -2\langle v, B^{-1}v\rangle  + \langle v, U^*A^{-1}U v\rangle\\
&=& -\langle v, B^{-1}v\rangle  + \langle v, U^*A^{-1}U v\rangle\ 
\end{eqnarray*} 
\end{proof}

\begin{proof}[Proof of Theorem~\ref{thm:main2}] 
We apply  Lemma~\ref{conlemA} with $A := (t + \Delta_{\sigma,\rho})$,  $B = (t +\Delta_{\sigma_\cN,\rho_\cN})$ and 
$v := (\rho_\cN)^{1/2}$, and with $U$ defined as above. The lemma's condition, $U^*AU = B$, follows from  \eqref{key} and the fact that $U^*U = \one_\cK$. Therefore, applying Lemma~\ref{conlemA} with  $U(\rho_\cN)^{1/2} = \rho^{1/2}$, 
\begin{eqnarray}
S_{(t)}(\rho||\sigma) - S_{(t)}(\rho_\cN||\sigma_\cN) &=& \langle \rho^{1/2},   (t+\Delta_{\sigma,\rho})^{-1} \rho^{1/2}\rangle -  
 \langle \rho_\cN^{1/2},   (t+\Delta_{\sigma_\cN,\rho_\cN})^{-1}   \rho_\cN^{1/2}\rangle\nonumber\\
 &=&  \langle w_t, (t + \Delta_{\sigma,\rho}) w_t\rangle 
 \geq  t\|w_t\|^2, \label{eq:diff}
 \end{eqnarray}
 where, recalling that  $U(\rho_\cN)^{1/2} = \rho^{1/2}$,
 \begin{equation}\label{resolv}
w_t := U (t +\Delta_{\sigma_\cN,\rho_\cN})^{-1} (\rho_\cN)^{1/2} -  (t + \Delta_{\sigma,\rho})^{-1}\rho^{1/2}\ .
\end{equation}

 Using the integral representation of the square root function,
 $$
 X^{1/2} = \frac{1}{\pi} \int_0^\infty t^{1/2} \left(\frac{1}{t} - \frac{1}{t+X}\right){\rm d}t,
 $$
 and $U(\cN \rho)^{1/2} = \rho^{1/2}$ once more, we conclude that
 $$
 U(\Delta_{\sigma_\cN,\rho_\cN})^{1/2} (\rho_\cN)^{1/2}  -  (\Delta_{\sigma,\rho})^{1/2}\rho^{1/2} =  \frac{1}{\pi}\int_0^\infty t^{1/2}w_t{\rm d}t\ .
 $$
 On the other hand,
 \begin{eqnarray*}
 U(\Delta_{\sigma_\cN,\rho_\cN})^{1/2} (\rho_\cN)^{1/2}  -  (\Delta_{\sigma,\rho})^{1/2}\rho^{1/2} &=& 
 U (\sigma_\cN)^{1/2} - \sigma^{1/2} \\
 &=& (\sigma_\cN)^{1/2}(\rho_\cN)^{-1/2} \rho^{1/2} - \sigma^{1/2}\ .
 \end{eqnarray*}
 Therefore, combining the last two equalities and taking the Hilbert space norm associated with $\cH$, for any $T>0$, 
\begin{eqnarray}\label{REM0}
\|(\sigma_\cN)^{1/2}(\rho_\cN)^{-1/2} \rho^{1/2} - \sigma^{1/2}\|_2 &=&
 \frac{1}{\pi}\left\Vert \int_0^\infty  t^{1/2} w_t{\rm d}t\right\Vert_2  \nonumber\\
&\leq&   \frac{1}{\pi} \int_0^T t^{1/2}  \|w_t\|_2{\rm d}t +  \frac{1}{\pi}\left\| \int_T^\infty t^{1/2} w_t {\rm d}t \right\|_2\ .
\end{eqnarray}
We estimate  these two terms separately. For the first term,  by the Cauchy-Schwarz inequality, 
\begin{eqnarray}\label{REM1}
\left(\int_0^T t^{1/2}  \|w_t\|_2{\rm d}t  \right)^2 &\leq& T \int_0^T t \|w_t\|_2^2{\rm d}t \nonumber\\
&\leq& T \int_0^\infty\left(  S_{(t)}(\rho||\sigma) - S_{(t)}(\rho_\cN||\sigma_\cN)  \right){\rm d}t \nonumber\\
&=& T (S(\rho||\sigma) - S(\rho_\cN||\sigma_\cN))\ .
\end{eqnarray}
For the second term in (\ref{REM0}), note that for any positive  operator $X$
$$
t^{1/2} \left(\frac{1}{t} - \frac{1}{t+X}\right)   \leq  t^{1/2} \left(\frac{1}{t} - \frac{1}{t+\|X\|}\right)\one  =  \frac{\|X\|}{t^{1/2}(\|X\|+ t)}\one,
$$
and hence
$$
\int_T^\infty t^{1/2} \left(\frac{1}{t} - \frac{1}{t+X}\right) {\rm d}t \leq \|X\|^{1/2}\left(\int_{T/\|X\|}^\infty \frac{1}{t^{1/2}(1+t)}  {\rm d}t\right) \one \leq \frac{2\|X\|}{T^{1/2}}\one\ .
$$
The spectra of $\sigma_\cN$ and $\rho_\cN$ lie in the convex hulls of the spectra of $\sigma$ and $\rho$ respectively. It follows that $\|\Delta_{\sigma_\cN,\rho_\cN}\|  \leq \|\Delta_{\sigma,\rho}\|$.  Therefore, recalling the definition of $w_t$ in (\ref{resolv}), we obtain
\begin{equation}\label{REM2}
\left\| \int_T^\infty t^{1/2} w_t {\rm d}t \right\|_2  \leq \frac{4 \|\Delta_{\sigma,\rho}\|}{T^{1/2}}\ .
\end{equation}
Combining \eqref{REM0}, \eqref{REM1} and \eqref{REM2} we obtain
$$\|(\sigma_\cN)^{1/2}(\rho_\cN)^{-1/2} \rho^{1/2} - \sigma^{1/2}\|_2 \leq  
 \frac{1}{\pi} T^{1/2} (S(\rho||\sigma) - S(\rho_\cN||\sigma_\cN))^{1/2}  \ +\  
  \frac{4 \|\Delta_{\sigma,\rho}\|}{\pi T^{1/2}} \ .$$
Optimizing in $T$,
$$
\|(\sigma_\cN)^{1/2}(\rho_\cN)^{-1/2} \rho^{1/2} - \sigma^{1/2}\|_2 \leq  \frac{4}{\pi} \|\Delta_{\sigma,\rho}\|^{1/2}
(S(\rho||\sigma) - S(\rho_\cN||\sigma_\cN))^{1/4}
$$
Rearranging terms
\begin{equation}\label{REM5}
S(\rho||\sigma) - S(\rho_\cN||\sigma_\cN)  \geq \left(\frac{\pi}{4}\right)^{4} \|\Delta_{\sigma,\rho}\|^{-2}
\|(\sigma_\cN)^{1/2}(\rho_\cN)^{-1/2} \rho^{1/2} - \sigma^{1/2}\|^4_2\ .
\end{equation}
\end{proof}

We now prove the lemma leading from \eqref{REM5a} to \eqref{REM5b}. 

\begin{lemma}
For any operators $X$ and $Y$ with $\Tr[X^*X] = \Tr[Y^*Y] =1$.  Then 
\begin{equation}\label{REM7}
\|X^*X - Y^*Y\|_1 \leq 2 \|X- Y\|_2\ .
\end{equation}
\end{lemma}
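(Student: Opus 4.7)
My plan is to exploit the telescoping identity
\begin{equation*}
X^*X - Y^*Y = X^*(X-Y) + (X-Y)^*Y,
\end{equation*}
which splits the difference into two pieces, each of which is a product of one "small" factor (the difference $X-Y$) and one "normalized" factor ($X$ or $Y$). This is the standard trick for estimating differences of quadratic quantities by differences of linear ones.

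With that identity in hand, the triangle inequality for the trace norm gives
\begin{equation*}
\|X^*X - Y^*Y\|_1 \leq \|X^*(X-Y)\|_1 + \|(X-Y)^*Y\|_1.
\end{equation*}
Next I would apply the Schatten-norm H\"older inequality $\|AB\|_1 \leq \|A\|_2 \|B\|_2$ to each term. This yields
\begin{equation*}
\|X^*(X-Y)\|_1 \leq \|X\|_2 \|X-Y\|_2 \quad\text{and}\quad \|(X-Y)^*Y\|_1 \leq \|X-Y\|_2 \|Y\|_2.
\end{equation*}

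Finally I would use the normalization hypotheses, which say exactly that $\|X\|_2 = (\Tr[X^*X])^{1/2} = 1$ and similarly $\|Y\|_2 = 1$. Substituting these values produces the claimed bound $\|X^*X - Y^*Y\|_1 \leq 2\|X-Y\|_2$.

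There is really no obstacle here: the whole argument is three lines once the telescoping identity is written down. The only point worth noting is that the two Schatten-H\"older estimates are tight in the sense that each factor $\|X\|_2$ and $\|Y\|_2$ contributes a factor of $1$ under the trace normalization, which is precisely what produces the constant $2$ on the right-hand side. To then derive Lemma~\ref{HStoTr} from this, one applies \eqref{REM7} with $X = \rho_\cN^{-1/2}\rho^{1/2}\sigma_\cN^{1/2}$-type choices so that $X^*X = \sR_\rho(\sigma_\cN)$ (using the explicit formula \eqref{ac14}) and $Y^*Y = \sigma$, both of which have trace $1$; the resulting Hilbert--Schmidt difference is $\|\sigma_\cN^{1/2}\rho_\cN^{-1/2}\rho^{1/2} - \sigma^{1/2}\|_2$ up to adjoint, yielding the stated factor of $1/2$.
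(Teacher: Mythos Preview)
Your proof is correct and follows essentially the same route as the paper: both arguments rest on the telescoping identity (the paper writes it as $X^*X - Y^*Y = (X^*-Y^*)X + Y^*(X-Y)$, which is the same up to relabeling) followed by a Cauchy--Schwarz/H\"older step and the normalizations $\|X\|_2 = \|Y\|_2 = 1$. The only cosmetic difference is that the paper passes through the dual formula $\|A\|_1 = \sup\{|\Tr[ZA]| : \|Z\|\le 1\}$ and applies the Hilbert--Schmidt Cauchy--Schwarz inequality together with $Z^*Z \le \one$, whereas you invoke the Schatten H\"older inequality $\|AB\|_1 \le \|A\|_2\|B\|_2$ directly; your version is a line shorter but the content is identical.
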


\begin{proof} Recall that for any operator $A$, $\|A\|_1 = \sup\{ |\Tr[ZA]\ :\ \|Z\| \leq 1\}$ where $\|\cdot\|$ denotes the operator norm. 
For any contraction $Z$, using cyclicity of the trace we have
\begin{eqnarray*}
|\Tr[Z(X^*X - Y^*Y)]| &\leq& |\Tr[Z(X^* - Y^*)X + ZY^*(X - Y)]|\\
&\leq& |\Tr[(X^* - Y^*)XZ| + |\Tr[ZY^*(X - Y)]\\
&\leq& (\Tr(X^*-Y^*)(X-Y)])^{1/2} (\Tr[X^*Z^*ZX])^{1/2}\\
&+& (\Tr(X^*-Y^*)(X-Y)])^{1/2} (\Tr[Y^*Z^*ZY])^{1/2}\\
&\leq& 2\|X- Y\|_2\ .
\end{eqnarray*}
\end{proof}

Applying this with $X = (\sigma_\cN)^{1/2}(\rho_\cN)^{-1/2} \rho^{1/2}$ and $Y= \sigma^{1/2}$,
we get
$$\|(\sigma_\cN)^{1/2}(\rho_\cN)^{-1/2} \rho^{1/2} - 
\sigma^{1/2}\|_2 \geq \frac12 \| \prr(\sigma_\cN) -\sigma\|_1\ .$$


\section{Structure of the solution set of the Petz equation}\label{Sec:Struc}

Define the CPTP map $\Phi: \cM \to \cM$ by
\begin{equation}\label{Phidef}
\Phi := \prr \circ \sE_\tau \ .
\end{equation}
The Petz equation \eqref{equal} can be written as $\Phi(\sigma) = \sigma$.   The adjoint of $\Phi$, is the completely positive unital map $\Phi^\dagger=\Psi: \cN\to \cN$ given by
\begin{equation}\label{Psidef}
\Psi := \iota_{\cN,\cM}\circ \sA_\rho \ 
\end{equation}
where $ \iota_{\cN,\cM}$ is the inclusion of $\cN$ in $\cM$.

The problem of determining all of the states fixed by $\Phi$ is closely related to the problem of determining all of the fixed points of $\Psi$ in $\cM$. This problem has been investigated in a general context  
by  Lindblad \cite{L99} in the  proof  of his  {\em General No-cloning Theorem}, drawing on earlier work by  Choi and Kadison. It was also investigated in this specific context by Accardi and Cecchini. 
For now, we need not assume that $\Psi$ is given by \eqref{Psidef}. {\em For now, all we require is that
$\Psi$ is a unital completely positive map from $\cM$ to $\cM$, and that its Hilbert-Schmidt dual 
$\Phi$ has a faithful invariant state $\rho$}.

Then, by an often used argument,  the map $\Psi$ is a contraction on $(\cM, \langle \cdot, \cdot\rangle_{GNS,\rho})$:
By the operator Schwarz inequality, for all $X\in \cM$, $\Psi(X)^*\Psi(X) \leq \Psi(X^*X)$. Then 
$$\|\Psi(X)\|_{GNS,\rho}^2 = \rho(\Psi(X)^*\Psi(X) ) \leq \rho(\Psi(X^*X)) = \Phi(\rho)(X^*X) = \|X\|_{GNS,\rho}^2\ .$$

Define
\begin{equation}\label{Cdef}
\mathcal{C}  = \{ X\in \cN \ : \ \Psi(X) = X\ \}\ , 
\end{equation}
which is evidently a subspace.
Let $\sE_{\mathcal{C} }$ be the orthogonal projection in $(\cM, \langle \cdot, \cdot\rangle_{GNS,\rho})$
onto ${\mathcal{C}}$.    
Then, arguing as in \cite{L99}, by  the von Neumann Mean Ergodic Theorem, 
$${\sE}_{\mathcal C}(X) = \lim_{N\to\infty}\frac{1}{N} \sum_{j=1}^N \Psi^{j}(X)\ ,$$
The following lemma may be found in \cite{L99}; we give the short proof for the reader's convenience. 

\begin{lemma}\label{Lindlem1} Let $\Phi$ be a CPTP map on $\cM$, and let $\Psi = \Phi^\dagger$. 
A density matrix $\tau\in \cM$ satisfies $\Phi(\tau) = \tau$ if and only if it satisfies 
${\sE}_{\mathcal C}^\dagger(\tau) = \tau$.
\end{lemma}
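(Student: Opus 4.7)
The plan is to lift the mean ergodic representation
$$\sE_{\mathcal{C}}(X) = \lim_{N\to\infty}\frac{1}{N}\sum_{j=1}^N \Psi^j(X),$$
stated in the excerpt, to the Hilbert--Schmidt dual. Since $(\Psi^j)^\dagger = \Phi^j$ and the limit above exists as a linear map on the finite-dimensional space $\cM$, passage to adjoints yields
$$\sE_{\mathcal{C}}^\dagger(Y) = \lim_{N\to\infty}\frac{1}{N}\sum_{j=1}^N \Phi^j(Y)\ .$$

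For the forward direction, if $\Phi(\tau) = \tau$ then $\Phi^j(\tau) = \tau$ for every $j\geq 1$, so every partial Cesàro sum of $\Phi^j(\tau)$ already equals $\tau$, and the displayed dual identity gives $\sE_{\mathcal{C}}^\dagger(\tau) = \tau$.

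For the reverse direction, I would first establish the intertwining identity $\Phi\circ \sE_{\mathcal{C}}^\dagger = \sE_{\mathcal{C}}^\dagger$. This is obtained by taking the Hilbert--Schmidt adjoint of $\sE_{\mathcal{C}}\circ \Psi = \sE_{\mathcal{C}}$, which in turn follows from the telescoping calculation
$$\sE_{\mathcal{C}}(\Psi(X)) - \sE_{\mathcal{C}}(X) = \lim_{N\to\infty}\frac{1}{N}\bigl(\Psi^{N+1}(X) - \Psi(X)\bigr) = 0,$$
the limit vanishing because $\Psi$ is a contraction in $\langle\cdot,\cdot\rangle_{GNS,\rho}$, so $\{\Psi^j(X)\}_{j\geq 0}$ is bounded in the GNS norm and hence (by equivalence of norms on the finite dimensional $\cM$) in any norm. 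Granted $\Phi\circ\sE_{\mathcal{C}}^\dagger = \sE_{\mathcal{C}}^\dagger$, the hypothesis $\sE_{\mathcal{C}}^\dagger(\tau) = \tau$ gives
$$\Phi(\tau) = \Phi\bigl(\sE_{\mathcal{C}}^\dagger(\tau)\bigr) = \sE_{\mathcal{C}}^\dagger(\tau) = \tau\ .$$

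There is no substantive obstacle here: the only analytic input is convergence of the Cesàro averages (the von Neumann mean ergodic theorem, invoked in the excerpt), and everything else reduces to taking Hilbert--Schmidt duals and the elementary telescoping argument that shows $\sE_{\mathcal{C}}$ absorbs $\Psi$ on either side.
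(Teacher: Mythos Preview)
Your proof is correct and follows essentially the same route as the paper's: both directions rest on the mean ergodic representation of $\sE_{\mathcal C}$ together with the absorption identity $\sE_{\mathcal C}\circ\Psi=\sE_{\mathcal C}$ (which you justify by telescoping, while the paper simply asserts it as evident). The only cosmetic difference is that you pass to the Hilbert--Schmidt dual at the outset and argue directly with $\sE_{\mathcal C}^\dagger$ and $\Phi$, whereas the paper keeps $\sE_{\mathcal C}$ and $\Psi$ and phrases everything through trace pairings $\Tr(\tau\,\cdot\,)$; these are the same computation read in dual form.
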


\begin{proof}
Let $\tau$ be any density matrix $\tau$ in $\cM$ such that $\Phi(\tau) = \tau$
For all $N$  and all $X\in \cM$, 
$$\Tr(\tau X) = \frac{1}{N} \sum_{j=1}^N(\Tr(\Phi^j(\tau)X) =  
\frac{1}{N}\sum_{j=1}^N \Tr(\tau \Psi^{j}(X))\ .$$
In the limit, we obtain 
$\Tr(\tau X) = \Tr(\tau{\sE}_{\mathcal C}(X)) = \Tr({\sE}_{\mathcal C}^\dagger(\tau) X)$.  Hence $\tau = {\sE}_{\mathcal C}^\dagger\tau$.

Now suppose that $\tau$ is any density matrix in $\cM$ satisfying  $\tau = {\sE}_{\mathcal C}^\dagger \tau$.
Since evidently, $\Psi \circ  {\sE}_{\mathcal C} =   {\sE}_{\mathcal C}\circ  \Psi = {\sE}_{\mathcal C}$,  
for all $X\in \cM$,
$$\Tr(\tau X) = \Tr({\sE}_{\mathcal C}^\dagger(\tau) X) = \Tr(\tau, ({\sE}_{\mathcal C}(X))) = 
\Tr(\tau, ({\sE}_{\mathcal C}(\Psi X))) = \Tr(\tau, \Psi (X))) = \Tr(\Phi(\tau) X)\ .$$
since $X$ is arbitrary, $\Phi(\tau) = \tau$. 
\end{proof}

Furthermore, by results of Choi \cite{C74} and Lindblad \cite{L99}, $\mathcal{C}$ is a  unital $*$-subalgebra of $\cN$.  Let ${\mathcal Z}$ denote the center of $\cC$, which is commutative von Neumann algebra. 
Because $\mathcal{Z}$ is commutative, it has a particularly simple structure: If $P$ and $Q$ are two orthogonal projections in $\mathcal{Z}$, then $PQ = QP$ is also an orthogonal projection in $\mathcal{Z}$.  Since $\mathcal{Z}$
is the closed linear span of the projections contained in it, one easily deduces the existence of a family 
$\{P_1, \dots,P_J\}$ of mutually orthogonal projections summing to the identity such that  $\mathcal{Z}$ is the span of these projections. 

Define $\cH^{(j)}$, $j= 1,\dots, J$, to be the range of $P_j$. Then the Hilbert space $\cH$ on which $\cM$ acts can be decomposed as 
${\displaystyle \cH = \bigoplus_{j=1}^J \cH^{(j)}}$.
(The notation is chosen to avoid confusion with tensor product decompositions such as, e.g., $\cH = \cH_1\otimes \cH_2$ for bipartite systems.)
Then each $\cH^{(j)}$ is invariant under $\mathcal{C}$, and the center of $\mathcal{C}$ restricted 
to each $\cH^{(j)}$ is trivial -- it is spanned by $P_j$, the identity on $\cH^{(j)}$.   Therefore, the 
restriction of $\mathcal{C}$  to each $\cH^{(j)}$ is a {\em factor} -- a $*$-subalgebra of 
$\cB(\cH)$ with a trivial center. By the well-known structure theorem for finite dimensional factors, 
$\cH^{(j)}$ can be factored as $\cH^{(j)} = \cH_{j,\ell}\otimes \cH_{j,r}$
and 
$$\mathcal{C}_j =   1_{\cH_{j,\ell}}\otimes \cB(\cH_{j,r}) \ .$$
Using this decomposition and structure theorem, Lindblad proves \cite[Section 4]{L99} the following, stated here in terms  of the notation set above:

\begin{lemma}\label{Lindlem2} Let $\Psi$ be a unital completely positive map on $\cM$, where $\cM$ acts on a finite dimensional Hilbert space $\cH$, and where $\Psi^\dagger$ leaves a faithful state $\rho$ invariant.
Let $\sE_{\cC}$ be the orthogonal projection onto $\cC$, the $C^*$ algebra of fixed points of $\Psi$, with respect to the GNS inner product induced by $\rho$.   Then there are
uniquely determined density matrices $\{\gamma_{1,\ell},\dots,\gamma_{J,\ell}\}$, where $\gamma_{j,r}$ acts on $\cH_{j,r}$,
such that for all $Y\in \cM$,
$$
\sE_{\cC}(Y) = \sum_{j=1}^J1_{\cH_{j,\ell}} \otimes \Tr_{\cH_{j,\ell}}[ ( \gamma_{j,\ell}\otimes 1_{\cH_{j,r}}) P_jYP_j] \ ,
 $$
 where $\Tr_{\cH_{j,\ell}}$ denotes the trace over $\cH_{j,\ell}$. 
\end{lemma}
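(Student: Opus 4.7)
The plan is to handle the lemma in three stages: first decompose $\rho$ as a direct sum with respect to the central projections $P_j$ of $\cC$, then establish that each summand factorises as a tensor product on $\cH^{(j)}=\cH_{j,\ell}\otimes\cH_{j,r}$, and finally verify the explicit formula for $\sE_\cC$ by matching GNS inner products. For Stage 1, I would argue that $[\rho,P_j]=0$ for each $j$. Since every $X\in\cC$ satisfies $\Psi(X^*X)=X^*X=\Psi(X)^*\Psi(X)$ (as $\cC$ is a $*$-algebra of fixed points), the operator Schwarz inequality is saturated at $X$, placing $X$ in the multiplicative domain of $\Psi$; hence $\Psi(XY)=X\Psi(Y)$ and $\Psi(YX)=\Psi(Y)X$ for every $Y\in\cM$. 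Combined with $\rho\circ\Psi=\rho$, this gives $\rho([P_j,Y])=\rho([P_j,\Psi^k(Y)])$ for every $k$. Averaging and passing to the limit via the ergodic identity $\sE_\cC(Y)=\lim_N\tfrac{1}{N}\sum_{k=1}^N\Psi^k(Y)$ established in the discussion preceding Lemma~\ref{Lindlem1}, together with the centrality of $P_j$ in $\cC$, forces $\rho([P_j,Y])=0$ for every $Y\in\cM$. Consequently $\rho=\bigoplus_{j=1}^J\rho_j$ with $\rho_j:=P_j\rho P_j$ a positive operator on $\cH^{(j)}$.

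For Stage 2, I would observe that the same ergodic average exhibits $\sE_\cC$ as a CP unital idempotent onto the $*$-algebra $\cC$, which by Tomiyama's theorem is a conditional expectation. The forthcoming Theorem~\ref{takvar} then yields invariance of $\cC$ under the modular operator $\Delta_\rho$. Restricting this invariance to the $j$-th block and writing $\rho_j=\exp(H_j)$, the derivation $X\mapsto[H_j,X]$ preserves $\cC_j=\idty_{\cH_{j,\ell}}\otimes\cB(\cH_{j,r})$; since every derivation of the matrix algebra $\cB(\cH_{j,r})$ is inner, a short computation yields $H_j=D_j\otimes\idty+\idty\otimes C_j$, and hence $\rho_j=e^{D_j}\otimes e^{C_j}$. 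Normalising the left factor to be a density produces $\rho_j=\gamma_{j,\ell}\otimes\eta_j$ as required, with $\eta_j$ a strictly positive operator on $\cH_{j,r}$ (of trace equal to $\Tr(\rho P_j)$).

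For Stage 3, a direct GNS computation verifies the formula. For any $A=\sum_j\idty_{\cH_{j,\ell}}\otimes A_{j,r}\in\cC$ and any $Y\in\cM$, the block-diagonal form of $\rho$ and the tensor form of $\rho_j$ yield
$$
\langle A,Y\rangle_{GNS,\rho}=\sum_j\Tr_{\cH^{(j)}}\!\bigl((\gamma_{j,\ell}\otimes\eta_j)(\idty\otimes A_{j,r}^*)P_jYP_j\bigr),
$$
and carrying out the partial trace over $\cH_{j,\ell}$ identifies this with $\langle A,\widetilde Y\rangle_{GNS,\rho}$, where $\widetilde Y=\sum_j\idty_{\cH_{j,\ell}}\otimes \Tr_{\cH_{j,\ell}}[(\gamma_{j,\ell}\otimes\idty)P_jYP_j]$. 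This shows $\sE_\cC(Y)=\widetilde Y$, and uniqueness of each $\gamma_{j,\ell}$ follows by inverting this relation on a generating set (equivalently, by reading off $\gamma_{j,\ell}$ as the left marginal of $\rho_j$). I expect Stage 2 to be the main obstacle: the multiplicative-domain argument of Stage 1 is strong enough to recover commutation with the central $P_j$ but not the full modular invariance of $\cC$, so one must route through the conditional-expectation property of $\sE_\cC$ via Tomiyama and Theorem~\ref{takvar}; only then does the inner-derivation argument produce the tensor decomposition on each block.
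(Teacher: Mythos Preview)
Your argument is correct. The paper does not supply its own proof of this lemma; it simply attributes the result to Lindblad \cite[Section~4]{L99} after setting up the block decomposition of $\cC$. So there is no in-paper argument to compare against line by line.

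That said, your route is worth recording because it is self-contained within the paper's framework. Stage~1 (the multiplicative-domain argument giving $[\rho,P_j]=0$) and Stage~3 (the GNS verification) are straightforward. The substantive step is Stage~2, and your handling of it is clean: the ergodic average exhibits $\sE_\cC$ as a unital completely positive idempotent onto $\cC$, hence a norm-one projection, hence by Tomiyama a conditional expectation; since $\sE_\cC$ is by definition the GNS orthogonal projection onto $\cC$, Theorem~\ref{takvar} (whose proof in Section~\ref{Sec:Exp} is independent of this lemma, so the forward reference is not circular) yields $\Delta_\rho$-invariance of $\cC$. The inner-derivation trick on each block then forces $\log\rho_j = D_j\otimes\idty + \idty\otimes C_j$ via the commutant identity $(\idty_{\cH_{j,\ell}}\otimes\cB(\cH_{j,r}))' = \cB(\cH_{j,\ell})\otimes\idty_{\cH_{j,r}}$, giving the tensor factorisation $\rho_j = \gamma_{j,\ell}\otimes\eta_j$. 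This is exactly the structural input the paper subsequently exploits in the proof of Theorem~\ref{thm:main} (see \eqref{bonn1}--\eqref{bonn3}), so your argument dovetails with the surrounding material rather than merely reproducing an external citation.
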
 
From this explicit description of $\sE_{\cC}$, one readily deduces that 
\begin{equation}\label{etaud}
\sE_\cC^\dagger(\tau) = \sum_{j=1}^J \gamma_{j,\ell}\otimes \Tr_{\cH_{j,\ell}}(P_j\tau P_j) \ 
\end{equation}
where $\gamma_{j,\ell}\otimes \Tr_{\cH_{j,\ell}}(P_j\tau P_j)$ is defined as operators on all of $\cH$
by setting it to zero on the orthogonal complement of $\cH^{(j)}$.   Hence $\tau = 
\sE_\cC^\dagger(\tau)$ if and only if $\tau$ is given by the right hand side of \eqref{etaud}.

Now return to the case at hand, in which $\Phi$ and $\Psi$ are given by 
\eqref{Phidef} and \eqref{Psidef} respectively.  

\begin{proof}[Proof of Theorem~\ref{thm:main}]
Since $\Phi\rho = \rho$, Lemma~\ref{Lindlem1}, Lemma~\ref{Lindlem2} and \eqref{etaud} yield
\begin{equation}\label{bonn1}
\rho =  \sum_{j=1}^J  \gamma_{j,\ell}\otimes \Tr_{\cH_{j,\ell}}(P_j\rho P_j)
\end{equation}
with the set $\{\gamma_{1,\ell},\dots,\gamma_{J,\ell}\}$ determined by $\cC$, the fixed point algebra of
$\Psi = \iota_{\cN,\cM}\cdot \sA_\rho$.

Next observe that $\cC$ is a von Neumann subalgebra of $\cN$, and is in 
fact that fixed point algebra of $\widetilde \Psi := \sA_\rho \circ \iota_{\cN,\cM}$ 
as well as of  $\Psi = \iota_{\cN,\cM} \circ  \sA_\rho$.  Since  
$\widetilde \Phi := \widetilde \Psi^\dagger = \sE_\tau\circ \sR_\rho$, we have 
$\widetilde \Phi \rho_\cN = \rho_\cN$ and 
$\widetilde \Phi \sigma_\cN = \sigma_\cN$.  Using Lemma~\ref{Lindlem1}, 
Lemma~\ref{Lindlem2} and \eqref{etaud} once more, we see that for some 
$\{\widetilde \gamma_{i,\ell},\dots,\widetilde \gamma_{J,\ell}\}$, where for each $j$,  
$\widetilde \gamma_{j,\ell}$ is a density matrix on $\cH_{j,\ell}$
\begin{equation}\label{bonn1}\rho_\cN = 
 \sum_{j=1}^J \widetilde \gamma_{j,\ell}\otimes \Tr_{\cH_{j,\ell}}(P_j\rho_\cN P_j)\ .
 \end{equation}
Now observe that we may factor
\begin{equation}\label{bonn3}
\rho =  \sum_{j=1}^J   (\gamma_{j,\ell}\otimes \one_{\cH_{j,r}})
(\one_{\cH_{j,\ell}} \otimes  (\Tr_{\cH_{j,\ell}}(P_j\rho P_j)))\ 
\end{equation}
For each $j$, $\one_{\cH_{j,\ell}} \otimes  \Tr_{\cH_{j,\ell}}(P_j\rho P_j)\in \cC\subset \cN$. 
Therefore,
\begin{equation}\label{bonn4}
\rho_\cN = \sE_\tau\rho = \sum_{j=1}^J   \sE_\tau (\gamma_{j,\ell}\otimes \one_{\cH_{j,r}})
(\one_{\cH_{j,\ell}} \otimes  (\Tr_{\cH_{j,\ell}}(P_j\rho P_j)))\ 
\end{equation}
We now claim that for each $j$,
\begin{equation}\label{bonn5}
\sE_\tau (\gamma_{j,\ell}\otimes \one_{\cH_{j,r}}) = \widetilde \gamma_{j,\ell}\otimes \one_{\cH_{j,r}}\ .
\end{equation}
To see this note that since ${\mathcal Z} \subset \cC \subset \cN$, $\cN' \subset \cC' \subset {\mathcal Z}' = \{P_1,\dots,P_J\}'$, every unitary in $\cN'$ commutes with each $P_j$
and thus has the block form $U = \sum_{j=1}^J P_j U P_j$. Moreover, again using the fact that
$\cN' \subset \cC'$, and that the commutator of $\one_{\cH_{j,\ell}}\otimes \cB(\cH_{j,r})$ is
$\cB(\cH_{j,\ell}) \otimes  \one_{\cH_{j,r}}$, we see that $U$ has the form
$$U = \sum_{j=1}^J P_j (U_{j,\ell}\otimes  \one_{\cH_{j,r}}) P_j, $$
where $U_{j,\ell}$ is unitary on $\cH_{j,\ell}$, though in general, only a subset of the 
block unitaries of this form belong to $\cN'$.   In any case, representing 
$\sE_\tau$ as an average over appropriate unitaries of this form \cite{D59,U73}, we obtain \eqref{bonn5}. 
Now combining \eqref{bonn3}, \eqref{bonn4} and \eqref{bonn5} yields
\begin{equation}\label{bonn6}
\rho_\cN = \sum_{j=1}^J  (\widetilde\gamma_{j,\ell}\otimes \one_{\cH_{j,r}})
(\one_{\cH_{j,\ell}} \otimes  (\Tr_{\cH_{j,\ell}}(P_j\rho P_j)))\ \ .
\end{equation}

Combing \eqref{bonn3} and \eqref{bonn6} it follows that 
${\displaystyle \rho^{-1}\rho_\cN = \sum_{j=1}^J  (\gamma_{j,\ell}^{-1}\widetilde\gamma_{j,\ell}\otimes \one_{\cH_{j,r}})}$.
The general element $A$ of $\cC$ has the from 
${\displaystyle A = \sum_{j=1}^J    1_{\cH_{j,\ell}}\otimes A_{j,r}}$ where each $A_{j,r} \in \cB(\cH_{j,r})$.
For such $A$,
\begin{equation}
\Delta_\rho^{-1}(\Delta_{\rho_\cN}(A)) = \rho_\cN^{-1}\rho A\rho^{-1}\rho_\cN
=  A\ ,
\end{equation}
and this verifies that $\Delta_\rho(A) = \Delta_{\rho_\cN}(A)$ for all $A\in \cC$, which we know must be valid by Theorem~\ref{takvar}.

The same analysis applies to $\sigma$ and $\sigma_\cN$ yielding
$
\sigma =  \sum_{j=1}^J   (\gamma_{j,\ell}\otimes \one_{\cH_{j,r}})
(\one_{\cH_{j,\ell}} \otimes  (\Tr_{\cH_{j,\ell}}(P_j\sigma P_j)))\ 
$
and 
$
\sigma_\cN = \sum_{j=1}^J  (\widetilde\gamma_{j,\ell}\otimes \one_{\cH_{j,r}})
(\one_{\cH_{j,\ell}} \otimes  (\Tr_{\cH_{j,\ell}}(P_j\sigma P_j)))\ ,
$ and then  $\Delta_\sigma(A) = \Delta_{\sigma_\cN}(A)$ for all $A\in \cC$.
\end{proof}

\begin{theorem}\label{largest} Let $\cC$ be defined by \eqref{Cdef} and $\Psi := \iota_{\cN,\cM}\circ \sA_\rho$, 
and let $\cB$ be any other von Neumann subalgebra of $\cN$ that is invariant under
$\Delta_\rho$. Then $\cB \subset\cC$.
\end{theorem}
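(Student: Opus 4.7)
The strategy is to show directly that $\sA_\rho(B)=B$ for every $B\in\cB$, using the explicit formula \eqref{ac12} together with a bridging lemma that $\Delta_\rho$ and $\Delta_{\rho_\cN}$ act identically on $\cB$.

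First I would apply Theorem~\ref{takvar} (equivalently, the finite-dimensional Takesaki theorem) twice. Because $\cB$ is a $\Delta_\rho$-invariant von Neumann subalgebra of $\cM$, there exists a $\rho$-preserving conditional expectation $\sE_\cB:\cM\to\cB$. Its restriction to $\cN$ is a projection of $\cN$ onto $\cB$ that is completely positive and $\cB$-bimodular, and it preserves $\rho_\cN$ because for $Y\in\cN$ one has $\Tr(\rho_\cN\sE_\cB(Y))=\Tr(\rho\,\sE_\cB(Y))=\Tr(\rho Y)=\Tr(\rho_\cN Y)$. Hence $\sE_\cB\restriction_\cN$ is a $\rho_\cN$-preserving conditional expectation from $\cN$ onto $\cB$. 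Applying Theorem~\ref{takvar} now to the pair $\cB\subset\cN$ with state $\rho_\cN$ yields $\rho_\cN B\rho_\cN^{-1}\in\cB$ for all $B\in\cB$; in particular $\cB$ is also invariant under $\Delta_{\rho_\cN}^{1/2}$.

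Second, the restrictions $\rho\restriction_\cB$ and $\rho_\cN\restriction_\cB$ coincide (both equal $B\mapsto\Tr(\rho B)$ since $\cB\subset\cN$). On the fixed algebra $\cB$, the modular automorphism group is uniquely determined by the state. The groups $t\mapsto \rho^{it}\cdot\rho^{-it}$ and $t\mapsto \rho_\cN^{it}\cdot\rho_\cN^{-it}$, restricted to $\cB$ (each restriction is well-defined by Step 1), are both KMS modular automorphism groups for this common state, so they agree. Analytic continuation to $t=-i/2$ — equivalently, matching eigenspaces of $\Delta_\rho\restriction_\cB$ and $\Delta_{\rho_\cN}\restriction_\cB$ in our finite-dimensional setting — gives the crucial identity
\begin{equation*}
\rho^{1/2} B \rho^{-1/2}\;=\;\rho_\cN^{1/2} B \rho_\cN^{-1/2} \qquad \text{for all } B\in\cB.
\end{equation*}

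Third, I would substitute into \eqref{ac12}. For $B\in\cB$, set $B':=\rho^{1/2} B \rho^{-1/2}$; by Step 1, $B'\in\cB\subset\cN$. Write $\rho^{1/2}B\rho^{1/2}=B'\rho$, and use the $\cN$-bimodule property \eqref{tomi} of $\sE_\tau$ to get $\sE_\tau(B'\rho)=B'\sE_\tau(\rho)=B'\rho_\cN$. Therefore
\begin{equation*}
\sA_\rho(B)\;=\;\rho_\cN^{-1/2}(B'\rho_\cN)\rho_\cN^{-1/2}
\;=\;\rho_\cN^{-1/2}(\rho^{1/2}B\rho^{-1/2})\rho_\cN^{1/2}
\;=\;\rho_\cN^{-1/2}(\rho_\cN^{1/2}B\rho_\cN^{-1/2})\rho_\cN^{1/2}\;=\;B,
\end{equation*}
where the penultimate equality is Step 2. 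This shows $B\in\cC$ and completes the proof.

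The main obstacle is Step 1: converting the hypothesis, which is $\Delta_\rho$-invariance inside the ambient algebra $\cM$, into $\Delta_{\rho_\cN}$-invariance inside the subalgebra $\cN$. Once that bridge is built — via the two applications of Theorem~\ref{takvar} combined with uniqueness of modular automorphisms — the identification of $\sA_\rho(B)$ with $B$ is just algebra.
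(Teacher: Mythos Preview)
Your proof is correct and takes a somewhat different route from the paper's. The paper factors the Accardi--Cecchini map through $\cB$, writing $\sA_{\rho,\cB,\cM} = \sA_{\rho,\cB,\cN}\circ\sA_{\rho,\cN,\cM}$, and uses Theorem~\ref{takvar}(2) to identify $\sA_{\rho,\cB,\cM}$ (and its $\cN$-relative analogue) with the GNS projection $\sP_\cB$; from $B=\sP_\cB\bigl(\sA_{\rho,\cN,\cM}(B)\bigr)$ it then extracts $\sA_{\rho,\cN,\cM}(B)=B$. You instead work directly with the explicit formula \eqref{ac12}: your Step~1 produces $\Delta_{\rho_\cN}$-invariance of $\cB$, your Step~2 yields the operator identity $\Delta_\rho^{1/2}=\Delta_{\rho_\cN}^{1/2}$ on $\cB$, and Step~3 is then precisely the computation appearing in the proof of Theorem~\ref{takvar}(2), specialized to $B\in\cB$. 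Your route avoids the composite coarse-graining operators and the somewhat implicit projection argument in the paper, at the cost of invoking KMS uniqueness; note, however, that Step~2 can be obtained more cheaply by applying Theorem~\ref{takvar}(2) twice---once to $\cB\subset\cM$ with state $\rho$ and once to $\cB\subset\cN$ with state $\rho_\cN$---and observing that $(\rho_\cN)_\cB=\rho_\cB$, which gives $\Delta_\rho|_\cB=\Delta_{\rho_\cB}|_\cB=\Delta_{\rho_\cN}|_\cB$ without any appeal to modular theory beyond what the paper already provides.
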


\begin{proof}  Let $\sE_{\tau,\cB}$ denote the tracial conditional expectation onto $\cB$, and define
$\rho_\cB := \sE_{\tau,\cB}\rho$.  Let  $\sA_{\rho,\cB,\cN}$, $\sA_{\rho,\cN,\cM}$ and 
$\sA_{\rho,\cB,\cM}$ be the Accardi-Cecchini coarse graining operators from $\cN$ to $\cB$, $\cM$ to $\cN$ and $\cM$ to $\cB$. A simple computation shows that
\begin{equation}\label{cmposite}
\sA_{\rho,\cB,\cM}  = \sA_{\rho,\cB,\cN}\circ \sA_{\rho,\cN,\cM}\ .
\end{equation}

Let $\sP_\cB$ denote the orthogonal projection of $\cM$ onto $\cB$  with respect to the GNS inner product induced by $\rho$. Since $\Delta_\rho$ leaves $\cB$ invariant, by {\it (2)} of Theorem~\ref{takvar}  $\sA_{\rho,\cB,\cM} = \sP_\cB$.  We claim that $\sA_{\rho,\cB,\cN}$
is the restriction of $\sP_\rho$ to $\cN$. Indeed, by the defining relation \eqref{ac0},
for all $X\in \cM$ and all $Y\in \cB$,
\begin{equation}\label{ac0B}
 \langle X,Y\rangle_{KMS,\rho} = \langle \sA_{\rho,\cB,\cM}(X),Y\rangle_{KMS,\rho_\cB}\ .
\end{equation}
Tautologically, this holds for all $X\in \cN$ and all $Y\in \cB$, and so for all $X\in \cN$,
$\sA_{\rho,\cB,\cN}(X) = \sA_{\rho,\cB,\cM}(X)$. We therefore have that for all $B\in B$
\begin{equation}\label{cmposite}
B = \sP_\rho B   = \sP_\rho B\circ \sA_{\rho,\cN,\cM}(B)\ ,
\end{equation}
and this implies that $B= \sA_{\rho,\cN,\cM}(B)$, which, by definition, means that $B\in \cC$. 
\end{proof}


\section{Conditional expectations}\label{Sec:Exp}

Recall from  the introduction that if  $\rho$ is  a faithful state on $\cM$ and 
$\cN$ is a von Neumann subalgebra of $\cM$,  then
there exists a conditional expectation $\sE$    from $\cM$ to $\cN$ such that
for all $X\in \cM$, $\rho(X) = \rho(\sE(X))$ if and only if the orthogonal projection onto
$\cN$ in the GNS inner product induced by $\rho$ is a conditional expectation.

This raises the question: For which faithful states $\rho$ is   the orthogonal projection onto
$\cN$ in the GNS inner product induced by $\rho$ is actually a conditional expectation?

\if false
As explained in the introduction, after the work of Tomiyama \cite{To57}, 
we may say that a linear map
$\sE$ from $\cM$ to $\cN$ is a conditional expectation  if 
and only if it is norm one projection from $\cM$ onto $\cN$, and whenever 
this is the case, $\sE$ preserves positivity and satisfies \eqref{tomi} and \eqref{tomi2}. 
The following theorem is due to Umegaki.

\begin{theorem}[Umegaki]\label{Umtr} Let $\sE_\tau$ be the orthogonal projection onto 
$\cN$ in the Hilbert-Schmidt inner product, or, what is the same thing, with respect 
to the GNS inner product induced by $\tau$. Then $\sE_\tau$ is a conditional expectation 
in the sense of Umegaki. 
\end{theorem}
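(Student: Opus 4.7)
The plan is to verify the defining properties of a Umegaki conditional expectation one-by-one, using the characterization that $\sE_\tau(X)$ is the unique element of $\cN$ with $\Tr[Y^*\sE_\tau(X)] = \Tr[Y^*X]$ for every $Y\in\cN$. Since $\tau$ is tracial, the $GNS_\tau$ and Hilbert--Schmidt inner products differ only by the global scalar $n^{-1}$, so it suffices to work with $\langle\cdot,\cdot\rangle_{HS}$.

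The unital and projection properties are immediate: $\one\in\cN$ forces $\sE_\tau(\one)=\one$, and $\sE_\tau$ restricts to the identity on $\cN$ by construction. The bimodule identity \eqref{tomi} follows from cyclicity of the trace. For $A,B\in\cN$, $X\in\cM$, and any test element $Y\in\cN$,
$$\langle Y, A\sE_\tau(X)B\rangle_{HS} = \Tr[BY^*A\,\sE_\tau(X)] = \Tr[BY^*A\,X] = \langle Y, AXB\rangle_{HS},$$
where the middle equality uses $BY^*A\in\cN$. Since $A\sE_\tau(X)B\in\cN$, uniqueness of the orthogonal projection yields $\sE_\tau(AXB) = A\sE_\tau(X)B$.

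The main obstacle is positivity, and the cleanest way I see to establish it is via the integral representation
$$\sE_\tau(X) = \int_{\mathcal{U}(\cN')} U^*XU \,\mathrm{d}U,$$
where $\mathcal{U}(\cN')$ is the compact group of unitaries in the commutant $\cN'$ and $\mathrm{d}U$ is normalized Haar measure. To justify this formula, I would verify that the right-hand side (a) commutes with every unitary in $\cN'$, hence lies in $\cN''=\cN$ by von Neumann's double commutant theorem; (b) fixes each $X\in\cN$, since $U^*XU=X$ for $U\in\cN'$; and (c) satisfies $\Tr[Y^*(U^*XU)]=\Tr[Y^*X]$ for $Y\in\cN$ by cyclicity together with $UY^*U^*=Y^*$. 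These three facts pin the right-hand side down as the Hilbert--Schmidt orthogonal projection onto $\cN$. Since each conjugation $X\mapsto U^*XU$ is a $*$-homomorphism and hence completely positive, the Haar average $\sE_\tau$ is completely positive; in particular it is order preserving. The Schwarz inequality \eqref{tomi2} is then a consequence of \eqref{tomi} together with positivity by the observation of Tomiyama recalled just after \eqref{tomi2}, which completes the verification.

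If one wishes to bypass the double commutant step, an alternative is to extract norm-contractivity from the integral representation --- each $U^*(\cdot)U$ is an isometry in operator norm, a property preserved by Haar averaging --- and then invoke Tomiyama's theorem \cite{To57} directly, which is already cited in the paper and identifies norm-one projections with conditional expectations.
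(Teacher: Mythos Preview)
Your proposal is correct and follows essentially the same approach as the paper: both rely on the Davis--Uhlmann representation $\sE_\tau(X) = \int_{\mathcal G} U^*XU\,{\rm d}\mu(U)$ over unitaries in $\cN'$, using the double commutant theorem to land in $\cN$ and reading off complete positivity from the conjugation structure. The only difference is organizational: the paper introduces the Haar average first and derives \emph{all} the conditional expectation properties from it (including the bimodule identity and the preservation of $\tau$-expectations), whereas you establish the unital, projection, and bimodule properties directly from the orthogonal-projection characterization and invoke the integral only for positivity.
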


The proof we give of this relies on a formula due to Davis \cite{D59} and Uhlmann \cite{U73} that will 
be useful in the next section.  Recall that $\cM$ itself is a subalgebra (or possibly all of) $M_n(\C)$. 
The group ${\mathcal G}$ 
of unitary matrices $U$ such that $U\in \cN'$ is a compact Lie subgroup of the 
group of all $n\times n$ unitary matrices. Let $\mu$ denote normalized ($\mu({\mathcal G}) =1$) right-invariant Haar 
measure on ${\mathcal G}$. Define a linear operator $\sP: \cM\to \cM$ by
\begin{equation}\label{projdef}
\sP(X) = \int_{\mathcal G} U^*XU{\rm d}\mu(U)\ .
\end{equation}
By the right invariance of $\mu$, for any $U_0\in {\mathcal G}$, 
$$U_0^*\sP(X)U_0 = \int_{\mathcal G} U_0^*U^*XUU_0{\rm d}\mu(U) = \sP(X)\ .$$
Therefore, $\sP(X)$ commutes with every unitary in $\cN'$, and since $\cN'$ is 
generated by the unitaries it contains, $\sP(X)\in \cN''$.  Then by the von 
Neumann Double Commutant Theorem,
$\cN''  = \cN$. Hence $\sP(X) \in \cN$ for all $X\in \cM$. It is evident that $\sP(A) = A$ for all $A\in \cN$
and by convexity of the operator norm, $\|\sP(X)\| \leq \|X\|$ for all $X\in \cM$. 
One could invoke Tomiyama's Theorem to assert that $\sP$ is a conditional 
expectation from $\cM$ onto $\cN$, 
but in this case, matters are very simple: For all $U\in \cN'$,  all $A,B\in \cN$ and all $X\in \cM$,
$U^*AXBU = A(U^*XU)B$, and averaging over $U$ yields $\sP(AXB) = A\sP(X)B$, so that \eqref{tomi}
is satisfied.  Moreover, not only is \eqref{tomi2} satisfied, it is evident that $\sP$ is completely positive. 

Then by cyclicity of the trace, for all $X\in \cM$
\begin{equation}\label{cyc}
\tau(\sP(X)) = \int_{\mathcal G} \tau(U^*XU){\rm d}\mu(U) = \int_{\mathcal G} \tau(X){\rm d}\mu(U) =\tau(X) \ . 
\end{equation}
Thus, \eqref{tomi3} is satisfied, and as pointed out in the Introduction, this 
implies that $\sP$ is the orthogonal projection from $\cM$ onto $\cN$ in the 
GNS inner product induced by $\tau$, or, what is the same thing, in the Hilbert-Schmidt inner product. 

Having made these observations, it is a simple matter to prove Theorem~\ref{Umtr}, and more:

\begin{proof}[Proof of Theorem~\ref{Umtr}]  By that we have explained above, $\sE_\tau$, the orthogonal projection of $\cM$ onto $\cN$ in the GNS inner product induced by $\tau$ is given by the formula
\begin{equation}\label{projdef2}
\sE_\tau(X) = \int_{\mathcal G} U^*XU{\rm d}\mu(U)\ .
\end{equation}
and it is a conditional expectation in the sense of Umegaki; in particular \eqref{tomi}, 
\eqref{tomi2} and \eqref{tomi3} are all satisfied with $\sE_\tau$ in place of $\sE$ and $\tau$ in place of 
$\rho$. 
\end{proof}

The formula \eqref{projdef2}  will be used in the next section. This way of writing 
$\sE_\tau$ is due to Davis and Uhlmann, and  one can even replace ${\mathcal G}$ 
with a finite group of unitaries in $\cN'$, and $\mu$ with the normalized counting 
measure on this finite group \cite{D59,U73}. 

In \eqref{cyc} we made use of cyclicity of the trace.  When the state $\rho$ is not cyclic; i.e., when $\rho$ does not belong to the center of $\cM$,  the orthogonal projection of $\cM$ onto $\cN$ will be a conditional expectation in the sense of Umegaki only when $\cN$ is a very special sort of von Neumann subalgebra of $\cM$. 

\fi

\begin{theorem}\label{takvar} Let $\cM$ be a finite dimensional von Neumann algebra, and let $\cN$ be a von Neumann subalgebra of $\cM$. Let $\rho$ be  a faithful state on $\cM$, and let $\Delta_\rho$ be the {\em modular operator} on $\cM$ defined by $\Delta_\rho(X) = \rho X \rho^{-1}$. Let $\sP_\rho$ be the
orthogonal projection from $\cM$ onto $\cN$ in the GNS inner product induced by $\rho$.  Then:

\smallskip

\noindent{\it (1)}  $\sP_\rho$ is real; i.e., it preserves self-adjointness, if and only if  $\cN$ is invariant under $\Delta_\rho$.

\smallskip

\noindent{\it (2)} $\cN$ is invariant under $\Delta_\rho$ if and only if for all $A \in \cN$, 
\begin{equation}\label{sameaut}
\Delta_\rho(A) = \Delta_{\rho_\cN}(A)\ ,
\end{equation}
in which case $\Delta^t_\rho(A) = \Delta^t_{\rho_\cN}(A)$ for all $t\in \R$.  Furthermore,
\eqref{sameaut} is valid for all $A\in \cN$ if and only if  $\sA_\rho(A) = A$ for all $A\in \cN$.
\end{theorem}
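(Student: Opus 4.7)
My plan is to prove the assertions in an order that avoids circularity: first the pivotal equivalence from part~(2), namely that $\cN$ is $\Delta_\rho$-invariant iff \eqref{sameaut} holds, together with its $\Delta_\rho^t$ extension; next part~(1); and finally the characterisation in terms of $\sA_\rho$, which will rely on part~(1).

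For the $\Rightarrow$ direction of the first equivalence in (2), the key trick is to compute $\sE_\tau(\rho A)$ in two different ways for $A\in\cN$. The module property \eqref{tomi} of $\sE_\tau$ gives $\sE_\tau(\rho A)=\rho_\cN A$, while the invariance assumption lets us write $\rho A=(\rho A\rho^{-1})\rho$ with $\rho A\rho^{-1}\in\cN$, so a second application of \eqref{tomi} yields $\sE_\tau(\rho A)=(\rho A\rho^{-1})\rho_\cN$; equating the two expressions gives \eqref{sameaut}. The converse is immediate since the right-hand side of \eqref{sameaut} lies in $\cN$. For the ``in which case'' statement, $\Delta_\rho$ is the product of the commuting positive self-adjoint operators of left multiplication by $\rho$ and right multiplication by $\rho^{-1}$ on $(\cM,\langle\cdot,\cdot\rangle_{HS})$, hence is itself positive self-adjoint; the invariant subspace $\cN$ is therefore preserved by every continuous function of $\Delta_\rho$, in particular by $\Delta_\rho^{t}$, and $\Delta_\rho^{t}|_\cN=(\Delta_\rho|_\cN)^{t}=(\Delta_{\rho_\cN}|_\cN)^{t}=\Delta_{\rho_\cN}^{t}|_\cN$ for every $t\in\R$.

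For part~(1), I would first derive the explicit formula $\sP_\rho(X)=\sE_\tau(X\rho)\rho_\cN^{-1}$ directly from the defining equation $\langle A,\sP_\rho(X)\rangle_{GNS,\rho}=\langle A,X\rangle_{GNS,\rho}$, using cyclicity of the trace together with the self-adjointness and module property of $\sE_\tau$. Since $\sE_\tau$ preserves adjoints (its range $\cN$ is $*$-closed), taking adjoints gives $\sP_\rho(X)^{*}=\rho_\cN^{-1}\sE_\tau(\rho X^{*})$, and reality of $\sP_\rho$ becomes $\rho_\cN\sE_\tau(Y\rho)=\sE_\tau(\rho Y)\rho_\cN$ for every $Y\in\cM$. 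Testing this identity against $A\in\cN$ in the Hilbert--Schmidt pairing, and collapsing $\sE_\tau$ using self-adjointness and the module property, reduces it to $\rho A\rho_\cN=\rho_\cN A\rho$ for all $A\in\cN$. Replacing $A$ by $A\rho_\cN^{-1}$ (a bijection of $\cN$) and multiplying by $\rho^{-1}$ on the right, this is precisely $\rho A\rho^{-1}=\rho_\cN A\rho_\cN^{-1}$, i.e., $\cN$ is $\Delta_\rho$-invariant by the first equivalence of (2).

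Finally, for the $\sA_\rho$ characterisation: the direction $(\Delta_\rho|_\cN=\Delta_{\rho_\cN}|_\cN)\Rightarrow(\sA_\rho|_\cN=\id_\cN)$ is a short calculation using the $t=\tfrac12$ case, $\rho^{1/2}A\rho^{-1/2}=\rho_\cN^{1/2}A\rho_\cN^{-1/2}\in\cN$, to write $\rho^{1/2}A\rho^{1/2}=(\rho_\cN^{1/2}A\rho_\cN^{-1/2})\rho$; the module property of $\sE_\tau$ then yields $\sE_\tau(\rho^{1/2}A\rho^{1/2})=\rho_\cN^{1/2}A\rho_\cN^{1/2}$, and the explicit formula \eqref{ac12} gives $\sA_\rho(A)=A$. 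The converse is the main obstacle. Given $\sA_\rho|_\cN=\id_\cN$, the unital completely positive map $\sA_\rho\colon\cM\to\cN$ is a norm-one projection, so Tomiyama's theorem promotes it to a conditional expectation in the Umegaki sense; combining with the $\rho$-preservation property \eqref{ac1} and the reasoning of \eqref{tomi4}, this forces $\sA_\rho=\sP_\rho$. Since conditional expectations are order-preserving, $\sP_\rho$ is real, and part~(1) delivers the $\Delta_\rho$-invariance of $\cN$, which via the first equivalence of (2) yields \eqref{sameaut} and closes the loop.
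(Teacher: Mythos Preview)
Your argument is correct and complete; the logical ordering (first equivalence of (2), then (1), then the $\sA_\rho$ characterisation using (1)) avoids any circularity. However, your route to (1) is genuinely different from the paper's.

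The paper proves (1) directly and abstractly. For the forward implication it takes a self-adjoint orthonormal basis of ${\rm Null}(\sP_\rho)$ and, using the unitarity of $X\mapsto X\rho^{1/2}$ from the GNS to the Hilbert--Schmidt space together with the antilinear isometry $X\mapsto X^*$, shows that $\Delta_\rho(A^*)$ is GNS-orthogonal to ${\rm Null}(\sP_\rho)$ for every $A\in\cN$. For the converse it introduces the self-adjoint subspace $\cK=\rho^{1/2}\cN=\cN\rho^{1/2}$ and argues via uniqueness of orthogonal decompositions that $\sP_\rho(H)=\sP_\rho(H)^*$ for self-adjoint $H$. You instead derive the closed formula $\sP_\rho(X)=\sE_\tau(X\rho)\rho_\cN^{-1}$ and reduce reality to the algebraic identity $\rho A\rho_\cN=\rho_\cN A\rho$ for all $A\in\cN$, which you then recognise as \eqref{sameaut} after a change of variable. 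Your path is shorter and more transparently computational; the paper's path is more geometric and does not rely on having the first equivalence of (2) in hand beforehand.

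For the first equivalence in (2) the paper computes $\langle A,\Delta_{\rho_\cN}(\Delta_\rho^{-1}B)\rangle_{GNS,\rho}=\langle A,B\rangle_{GNS,\rho}$ by two applications of cyclicity and the definition of $\rho_\cN$; your ``compute $\sE_\tau(\rho A)$ two ways'' argument via the module property \eqref{tomi} is an equivalent but slightly slicker packaging. For the extension to $\Delta_\rho^t$ the paper iterates to $\Delta_\rho^{-n}=\Delta_{\rho_\cN}^{-n}$ on $\cN$ and then passes to real powers, while you invoke the self-adjointness of $\Delta_\rho=L_\rho R_{\rho^{-1}}$ on the Hilbert--Schmidt space to block-diagonalise and apply the functional calculus directly; your route is cleaner here. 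The treatment of the $\sA_\rho$ characterisation, including the appeal to Tomiyama's theorem and \eqref{ac1} for the converse, is essentially identical in both proofs.
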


\begin{remark} Part {\it (2)} of Theorem~\ref{takvar} is due to Accardi and Cecchini \cite[Theorem 5.1]{AC82}. In our finite dimensional context,  we give a very simple proof; most of the proof below is devoted to {\it (1)}.
\end{remark}

\begin{proof}[Proof of Theorem~\ref{takvar}]  Suppose that $\sP_\rho$ is real. Then for all $X\in {\rm Null}(\sP_\rho)$,
$0 = (\sP_\rho(X))^* = \sP_\rho(X^*)$, so that ${\rm Null}(\sP_\rho)$ is a self adjoint subspace of
$\cM$. Let $m$ denote the dimension of ${\rm Null}(\sP_\rho)$. Then, applying the Gram-Schmidt Algorithm, one can produce an orthonormal basis
$\{H_1,\dots,H_m\}$ of ${\rm Null}(\sP_\rho)$ consisting of self-adjoint elements of $\cM$. 

The map $X\mapsto X\rho^{1/2}$ is unitary from $(\cM, \langle \cdot, \cdot\rangle_{GNS,\rho})$
to $(\cM, \langle \cdot, \cdot\rangle_{HS,\rho})$.  Therefore for all $A\in \cN$, and each $j=1,\dots, m$, 
$\langle A\rho^{1/2}, H_j\rho^{1/2}\rangle_{HS}  = 0$. Then since the map $X\mapsto X^*$ is an (antilinear) isometry on $(\cM, \langle \cdot, \cdot\rangle_{HS,\rho})$,
$$0 = \langle (H_j\rho^{1/2})^*, (A\rho^{1/2})^*,\rangle_{HS}  = \Tr[   H_j\rho A^*] = 
\Tr[ H_j \Delta_\rho(A^*)\rho] =
  \langle H_j, \Delta_\rho(A^*),\rangle_{GNS,\rho} \ .$$
 Therefore, $\Delta_\rho(A^*)$ is orthogonal to $ {\rm Null}(\sP_\rho)$ in  
 $(\cM, \langle \cdot, \cdot\rangle_{GNS,\rho})$, and hence $\Delta_\rho(A^*)\in \cN$.  Since $A$ is arbitrary in $\cN$, it follows that $\cN$ is invariant under $\Delta_\rho$. 
 
For the converse, suppose that $\cN$ is invariant under $\Delta_\rho$. Then $\cN$ is invariant under
$\Delta_\rho^s$ for all $s\in \R$, and in particular, $\cN$ is invariant under $\Delta_\rho^{/12}$.
Then $\rho^{1/2}\cN = \cN\rho^{1/2}$ as subspaces  of $\cM$; let $\cK$ denote this 
subspace of $\cM$, which is evidently self-adjoint.   Let $H = H^*\in \cM$. Then there 
are uniquely determined $A,B\in \cN$ such that $H\rho^{1/2} - A\rho^{1/2}$ and 
$\rho^{1/2}H - \rho^{1/2}B$ are both orthogonal to $\cK$ in the Hilbert-Schmidt inner product. Thus,
$$H\rho^{1/2}  = (H\rho^{1/2} - A\rho^{1/2}) +  A\rho^{1/2}\qquad{\rm and} \qquad
\rho^{1/2}H = (\rho^{1/2}H - \rho^{1/2}B) + \rho^{1/2}B\ $$
are the orthogonal decompositions of $H\rho^{1/2}$ and $\rho^{1/2}H$ with respect to $\cK$. 
Again since $X\mapsto X\rho^{1/2}$ is unitary from $(\cM, \langle \cdot, \cdot\rangle_{GNS,\rho})$
to $(\cM, \langle \cdot, \cdot\rangle_{HS,\rho})$, $\sP_\rho(H) = A$. We must show that $A = A^*$.

Since $X\mapsto X^*$ is an isometry for the Hilbert-Schmidt inner product, and since 
$\cK$ is self adjoint,
$$\rho^{1/2}H = (\rho^{1/2}H - \rho^{1/2}A^*) + \rho^{1/2}A^*$$
is again an orthogonal decomposition of $\rho^{1/2}H$ with respect to $\cK$, 
and by uniqueness, $B = A^*$. Thus,
$$\rho^{1/2}H = (\rho^{1/2}H - \rho^{1/2}A^*) + \rho^{1/2}A^* \ .$$
Now apply $\Delta_\rho^{-1/2}$ to both sides to obtain
$H\rho^{1/2}  = (H\rho^{1/2} - A^*\rho^{1/2}) +  A^*\rho^{1/2}$. 
We claim that $H\rho^{1/2} - A^*\rho^{1/2}$ is orthogonal to $\cK$. Once this is shown, it will follow that 
$H\rho^{1/2}  = (H\rho^{1/2} - A^*\rho^{1/2}) +  A^*\rho^{1/2}$ is the orthogonal decomposition of 
$H\rho^{1/2}$ with respect to $\cK$. Again by uniqueness of the orthogonal decomposition, it will follow
that $A =A^*$.

Hence it remains to show that $H\rho^{1/2} - A^*\rho^{1/2}$ is orthogonal to 
$\cK$ in the Hilbert-Schmidt inner product. The general element of $\cK$ 
can be written as $\rho^{1/2}Z$ for $Z\ in \cN$. 
Then
$$\langle \rho^{1/2}Z, H\rho^{1/2} - A^*\rho^{1/2}\rangle_{HS} = 
\Tr[Z^*(\rho^{1/2}H - \rho^{1/2}A^*)\rho^{1/2} = \langle Z\rho^{1/2}, 
(\rho^{1/2}H - \rho^{1/2}A^*)\rangle_{HS}\ .$$ But we have seen above
that $\rho^{1/2}H - \rho^{1/2}A^*$ is orthogonal to $\cK$, and $Z\rho^{1/2}\in \cK$.
This proves {\it (1)}. 

To prove {\it (2)}, note first of all that when \eqref{sameaut}) is valid for all $A\in \cN$, then $\Delta_{\rho}$ preserves $\cN$ since the right side evidently belongs to $\cN$. 

Now suppose the $\Delta_\rho$ preserves $\cN$. Let $A,B\in \cN$. Then $A^*\Delta_{\rho_\cN}(\delta_\rho(B)))
 \in \cN$, and then by the definition of $\sE_\tau$ and cyclicity of the trace, 
$$\Tr[\rho(A^*\rho_\cN
\rho^{-1}B\rho \rho_\cN^{-1})] = \Tr [A^*\rho_\cN
\rho^{-1}B\rho] = \Tr [\rho_\cN(\rho^{-1}B\rho A^*)] \ .$$
In the same way, using the fact that $(\rho^{-1}B\rho A)\in \cN$ and cyclicity of the trace,
$$\Tr[\rho_\cN(\rho^{-1}B\rho A^*)] = \Tr[B\rho A^*] = \Tr [\rho A^*B]\ .
$$
Altogether, $\langle A, \Delta_{\rho_N}(\Delta_\rho^{-1}(B))\rangle_{GNS,\rho} = 
\langle A,B\rangle_{GNS,\rho}$. Since $\Delta_{\rho_N}(\Delta_\rho^{-1}(B))\in 
\cN$, and $A$ is arbitrary in $\cN$, $\Delta_{\rho_N}(\Delta_\rho^{-1}(B)) = B$, and hence 
$\Delta_\rho^{-1}(B) = \Delta_{\rho_N}^{-1}(B)$.  Then $\Delta_\rho^{-n}(B) = \Delta_{\rho_N}^{-n}(B)$
for all $n\in \N$, and then it follows that $\Delta_\rho^{t}(B) = \Delta_{\rho_N}^{t}(B)$ for all $t\in \R$. 

Finally, we show that \eqref{sameaut} is valid for all $A\in \cN$, then $\sA_\rho(A) = A$ for all $A\in \cN$:
$$\sE_{\tau}(\rho^{1/2}A\rho^{1/2}) = \sE_{\tau}(\Delta^{1/2}_{\rho}(A) \rho) =
\Delta^{1/2}_{\rho_\cN}(A)\sE_{\tau}(\rho) = \rho_{cN}{A}^{1/2} A\rho_\cN^{1/2}\ .$$

By {\it (2)} of Theorem~\ref{takvar}, for all $A\in \cN$, $\Delta^{1/2}_{\rho_\cN}(A) = 
\Delta^{1/2}_\rho(A)$,
and therefore 
$$\sE_{\tau,\cN}(\rho^{1/2}A\rho^{1/2}) = \sE_{\tau}(\Delta^{1/2}_{\rho_\cN}(A) \rho) =
\Delta^{1/2}_\rho(A)\sE_{\tau}(\rho) = \rho_\cN^{1/2} A\rho_\cN^{1/2}\ .$$
That is,
$A = \rho_{\cN}^{-1/2}\sE_{\tau}(\rho^{1/2}A\rho^{1/2}) \rho_\cN^{-1/2} = \sA_\rho(A)$. 
On the other hand, when $A  = \sA_\rho(A)$ for all $A\in \cN$,  $\sA_\rho$ is a norm one projection onto $\cN$, and by Tomiyama's Theorem \cite{To57}, it is a conditional expectation, and it satisfies 
$\rho(\sA_\rho(X)) X$ for all $X\in \cM$. Therefore, it must coincide with $\sP_\rho$,  the orthogonal projection form
$\cM$ onto $\cN$ in the GNS inner product induced by $\rho$.  Hence $\sP_\rho$ is a conditional expectation. By what we proved earlier, this means that $\cN$ is invariant under $\Delta_\rho$,
and then that \eqref{sameaut} is valid for all $A\in \cN$.
\end{proof}

\begin{theorem}\label{equivalences}  Let $\sP_\rho$ denote the orthogonal projection of $\cM$ onto 
$\cN$ in the GNS inner product induced by $\rho$. Then 

\smallskip
\noindent{\it (1)} $\sP_\rho$ is a conditional expectation if and only if $\cN$ is invariant under $\Delta_\rho$. 

\smallskip
\noindent{\it (2)} $\sP_\rho$ is a conditional expectation if and only if $\sP_\rho$ is real.

\end{theorem}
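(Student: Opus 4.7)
The plan is to assemble Theorem~\ref{equivalences} entirely from pieces already proved in Theorem~\ref{takvar} and the general facts about conditional expectations and the Accardi--Cecchini operator recalled in the introduction. No substantially new calculation is required; the work is to trace the implications in the right order.

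For part~\textit{(1)}, I would first do the easy direction: if $\sP_\rho$ is a conditional expectation then, by Tomiyama's theorem, it is positivity preserving, hence in particular real (since it maps each self-adjoint element to a difference of two positive ones). Then part~\textit{(1)} of Theorem~\ref{takvar} immediately yields that $\cN$ is $\Delta_\rho$-invariant. For the converse, suppose $\cN$ is invariant under $\Delta_\rho$. By part~\textit{(2)} of Theorem~\ref{takvar}, this is equivalent to $\sA_\rho(A)=A$ for every $A\in\cN$. Since $\sA_\rho$ is a unital completely positive contraction from $\cM$ onto $\cN$ (as follows from the explicit formula \eqref{ac12}) and fixes $\cN$ pointwise, it is a norm-one projection of $\cM$ onto $\cN$; another application of Tomiyama's theorem shows that $\sA_\rho$ is a conditional expectation. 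Moreover, by \eqref{ac1}, $\rho(\sA_\rho(X))=\rho(X)$ for all $X\in\cM$, and the uniqueness argument given in the introduction (see \eqref{tomi4}) identifies any such expectation-preserving conditional expectation with the orthogonal projection onto $\cN$ in the GNS inner product induced by $\rho$. Hence $\sP_\rho=\sA_\rho$ is a conditional expectation.

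For part~\textit{(2)}, the forward direction is again immediate: a conditional expectation is positivity preserving, hence real. For the converse, if $\sP_\rho$ is real, then part~\textit{(1)} of Theorem~\ref{takvar} gives $\Delta_\rho$-invariance of $\cN$, and then part~\textit{(1)} of the present theorem (just established) shows that $\sP_\rho$ is a conditional expectation.

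There is really no hard step here; the only point that deserves emphasis is the identification $\sA_\rho=\sP_\rho$ under the $\Delta_\rho$-invariance hypothesis, since this is what upgrades ``real orthogonal projection'' to ``conditional expectation.'' That identification, however, is already implicit in the closing paragraphs of the proof of Theorem~\ref{takvar}, so part~\textit{(1)} and part~\textit{(2)} of Theorem~\ref{equivalences} follow by assembling those observations with Tomiyama's characterization of conditional expectations as norm-one projections.
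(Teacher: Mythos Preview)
Your proposal is correct and follows essentially the same approach as the paper's proof: both derive the implications from Theorem~\ref{takvar} together with Tomiyama's characterization of conditional expectations and the uniqueness argument \eqref{tomi4}. The only cosmetic difference is that for the converse of \textit{(1)} the paper cites Takesaki's existence theorem directly, whereas you route through $\sA_\rho$ via part~\textit{(2)} of Theorem~\ref{takvar}---but this is precisely the argument already given in the closing paragraph of the proof of Theorem~\ref{takvar}, so the two are the same in substance.
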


\begin{proof}   Theorem~\ref{takvar}  says that when 
$\Delta_\rho$ does not leave $\cN$ invariant, $\sP_\rho$ is not even 
real, and hence is not a conditional expectation.  On the other hand, when  
$\Delta_\rho$ leaves $\cN$ invariant, 
a theorem of Takesaki says that there exists a projection $\sE$  with unit 
norm from $\cM$ onto $\cN$ that satisfies \eqref{tomi3}.
By Tomiyama's Theorem and remarks we have made in the 
introduction,  this means that $\sE = \sP_\rho$, and  that $\sP_\rho$ is a conditional expectation in the sense of Umegaki. This proves {\it (1)}.

It is evident that if $\sP_\rho$ is a conditional expectation, this $\sP_\rho$ is real. On the other hand, if
$\sP_\rho$ is real, then by  Theorem~\ref{takvar}, $\cN$ is invariant under $\Delta_\rho$, and now {\it (2)} follows from {\it (1)}. 
\end{proof}


\section{Strong Subadditivity}\label{Sec:SSA}

We recall the proof of equivalence of the strong-subadditivity relation and the monotonicity of relative entropy under partial traces, according to \cite{LR73}, in which it is shown that strong sub-additivity relation can be written in the following form: for $\cH=\cH_1\otimes\cH_2\otimes\cH_3$ and $\rho_{123}\in \cB(\cH)$,
\begin{equation}\label{eq:SSA}
S(\rho_{12}|| \rho_1\otimes\rho_2)\leq S(\rho_{123}||\rho_1\otimes\rho_{23}), 
\end{equation}
where $\rho_{12} = \Tr_{\cH_3}\rho_{123}$ etc. (See \cite{LR73}). With $\cN := \cB(\cH_1\otimes\cH_2)$, 
$$S(\rho_{12}|| \rho_1\otimes\rho_2) = S((\rho_{123})_\cN|| (\rho_1\otimes\rho_{23})_\cN) \ .$$

The DPI inequality yields  
\begin{equation}\label{eq:mono-N-vN2}
S(\rho_\cN||\sigma_\cN)\leq S(\rho||\sigma),
\end{equation}
for $\rho,\sigma\in\cB(\cH)$.

\begin{lemma}(Lieb, Ruskai \cite{LR73})
Let $\cH=\cH_1\otimes\cH_2\otimes\cH_3$. The monotonicity of the relative entropy under partial traces holds for all states $\rho,\sigma\in\cB(\cH)$ if and only if the strong sub-additivity inequality 
\begin{equation}\label{SSAineq}
S(\rho_{12})+S(\rho_{23})  - S(\rho_{123})- S(\rho_2) \geq 0.
\end{equation}
holds for all states $\rho_{123}\in\cH_1\otimes\cH_2\otimes\cH_3$. 
\end{lemma}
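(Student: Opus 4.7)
The plan is to establish the equivalence via two separate arguments, one routine and one substantive.

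For the forward direction ($\Rightarrow$), I would apply the monotonicity inequality~\eqref{eq:mono-N-vN2} to the partial trace $\Tr_3 : \cB(\cH) \to \cB(\cH_1\otimes\cH_2)$, a CPTP map, with the specific choice $\rho := \rho_{123}$ and $\sigma := \rho_1 \otimes \rho_{23}$, where $\rho_1$ and $\rho_{23}$ are the marginals of $\rho_{123}$. Under $\Tr_3$ these push forward to $\rho_{12}$ and $\rho_1\otimes\rho_2$ respectively, so the inequality reads
\[
S(\rho_{12}\|\rho_1\otimes\rho_2)\ \leq\ S(\rho_{123}\|\rho_1\otimes\rho_{23}).
\]
Using $\log(\mu\otimes\nu) = \log\mu\otimes I + I\otimes\log\nu$, both sides decompose into combinations of von Neumann entropies:
\begin{align*}
S(\rho_{12}\|\rho_1\otimes\rho_2) &= -S(\rho_{12}) + S(\rho_1) + S(\rho_2),\\
S(\rho_{123}\|\rho_1\otimes\rho_{23}) &= -S(\rho_{123}) + S(\rho_1) + S(\rho_{23}).
\end{align*}
After cancelling $S(\rho_1)$, the inequality rearranges to exactly~\eqref{SSAineq}.

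For the reverse direction ($\Leftarrow$), assume SSA holds for all states on all tripartite Hilbert spaces. The plan is to derive monotonicity in two stages, following the Lieb--Ruskai framework~\cite{LR73}. First, I would show that SSA implies joint convexity of the relative entropy $(\tau,\mu)\mapsto S(\tau\|\mu)$; this is the substantive step, most cleanly carried out via the equivalence of SSA with Lieb's concavity theorem for $\Tr[K^* A^{1-t} K B^t]$, from which joint convexity of the Umegaki relative entropy follows immediately. Second, I would use joint convexity to establish monotonicity under a partial trace over some factor $\cH_j$ by a unitary--averaging trick: pick a finite family of unitaries $\{U_k\}_{k=1}^N$ on $\cH_j$ whose average implements the normalized trace, $\frac{1}{N}\sum_k U_k X U_k^* = (\Tr X / \dim\cH_j)\,I$. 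Then
\[
\tfrac{1}{N}\sum_k (I\otimes U_k)\rho(I\otimes U_k^*) = (\Tr_j\rho)\otimes(I_j/\dim\cH_j),
\]
and likewise for $\sigma$. Applying joint convexity together with the unitary invariance of relative entropy and the product additivity $S(\tau\otimes I/d\,\|\,\mu\otimes I/d)=S(\tau\|\mu)$ yields $S(\Tr_j\rho\|\Tr_j\sigma)\leq S(\rho\|\sigma)$. Iterating over the factors delivers monotonicity under every partial trace.

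The main obstacle is the SSA $\Rightarrow$ joint convexity step. A naive classical--quantum lifting via states of the form $\sum_i p_i\tau_i\otimes|i\rangle\langle i|$ only delivers monotonicity of the Holevo quantity $\chi(\{p_i,\tau_i\}) = S(\sum_i p_i\tau_i) - \sum_i p_i S(\tau_i)$ under partial trace, not full joint convexity: applying SSA to such a state yields only $\chi_{12}(\{p_i,\tau_i\}) \geq \chi_2(\{p_i,(\tau_i)_2\})$. Bridging this gap requires appealing to Lieb's concavity theorem or an equivalent variational characterization of relative entropy, both of which are themselves equivalent to SSA, and this circle of equivalences is the content of~\cite{LR73}.
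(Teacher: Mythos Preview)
Your forward direction matches the paper's argument exactly.

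For the reverse direction, you correctly identify the classical--quantum lifting $\rho_{123} = \sum_i p_i \tau_i \otimes |i\rangle\langle i|$ but dismiss it too quickly. What you call ``only'' Holevo monotonicity, $\chi_{12}\geq\chi_2$, is precisely \emph{concavity of the conditional entropy} $f(\gamma_{12}) := S(\gamma_{12}) - S(\gamma_2)$: rearranging your inequality gives $f\bigl(\sum_i p_i\tau_i\bigr) \geq \sum_i p_i f(\tau_i)$. The paper observes that $f$ is moreover \emph{homogeneous of degree one}, i.e.\ $f(t\gamma)=tf(\gamma)$ for $t>0$. For any concave homogeneous $f$ one has the linearization inequality
\[
\frac{d}{dt}\Big|_{t=0} f(\gamma + t\omega)\ \geq\ f(\omega),
\]
proved in one line by writing $f(\gamma+t\omega)=(1+t)f\bigl(\tfrac{1}{1+t}\gamma+\tfrac{t}{1+t}\omega\bigr)\geq f(\gamma)+tf(\omega)$. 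Computing the left side for the conditional entropy gives $-\Tr[\omega_{12}\log\gamma_{12}]+\Tr[\omega_2\log\gamma_2]$, and the resulting inequality rearranges to $S(\omega_{12}\|\gamma_{12})\geq S(\omega_2\|\gamma_2)$, which is monotonicity under $\Tr_1$.

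So the gap you flagged is not real: you do not need Lieb's concavity or joint convexity as an intermediate step. The route SSA $\Rightarrow$ concavity of conditional entropy $\Rightarrow$ monotonicity is direct, and the bridge you were missing is the homogeneity--plus--derivative trick. Your proposed detour through joint convexity and unitary averaging would also work once joint convexity is in hand, but establishing SSA $\Rightarrow$ joint convexity independently is harder than necessary here.
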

\begin{proof}
(MONO$\Rightarrow$ SSA) 
From here it is clear that taking the CPTP to be a partial trace over the third space and $\rho=\rho_{123}$, $\sigma=\rho_1\otimes\rho_{23}$ in (\ref{eq:mono-N-vN2}) leads to (\ref{eq:SSA}). 

(SSA$\Rightarrow$ MONO) Let us take the space $\cH_3$ to be 2-dimensional and the state $\rho_{123}$ in the following form
\begin{equation}\label{eq:state-linear}
\rho_{123}=\lambda\rho'_{12}\otimes E_3+(1-\lambda)\rho''_{12}\otimes F_3,
\end{equation} 
where $E_3$ and $F_3$ are orthogonal one-dimensional projections on $\cH_3$ and $\lambda\in[0,1]$. Then the SSA relation (\ref{eq:SSA}) for this $\rho_{123}$ is equivalent to the concavity property of the conditional entropy {$S(\rho_{12}\|\rho_1)$}, i.e. for any $\lambda\in[0,1]$ and any $\rho'_{12}$ and $\rho''_{12}$ above we have
\begin{equation}\label{eq:cond-eq}
\lambda (S(\rho'_{12})-S(\rho'_2))+(1-\lambda)(S(\rho''_{12})-S(\rho''_2))\leq S(\rho_{12})-S(\rho_2).
\end{equation}

Recall that a function $f:[0,1]\rightarrow \mathbb{R}$ is called operator convex if for all matrices $A,B$ with eigenvalues in $[0,1]$ and $0<\lambda<1$ the following holds
\begin{equation}\label{eq:convex-f}
f(\lambda A+(1-\lambda)B)\leq \lambda f(A)+(1-\lambda)f(B).
\end{equation}
Note that if an operator concave function $f$ is  homogeneous (i.e. $f(t A)=t f(A)$ for all $t>0$), then for positive matrices $A$ and $B$
\begin{equation}\label{eq:derivative}
\frac{d}{dt}\bigg|_{t=0}f(A+t B):=\lim_{t\rightarrow 0}\frac{1}{t}\{f(A+t B)-f(A)\}\geq f(B)
\end{equation}
the above limit exits. To see this, use the homogeneity first, and then the concavity of $f$ in the following way
\begin{align}
f(A+tB)&=(1+t)f\left(\frac{1}{1+t}A+\frac{t}{1+t}B\right)\\
&\geq (1+t)\left(\frac{1}{1+t}f(A)+\frac{t}{1+t}f(B)\right)\\
&=f(A)+tf(B).
\end{align}

Take a conditional entropy as this function $f$:
$$ f(\gamma_{12}):=S(\gamma_{12})-S(\gamma_2).$$
Then the derivative is 
$$\frac{d}{dt}f(\gamma_{12}+t \omega_{12})=-\Tr\omega_{12}\ln(\gamma_{12}+t \omega_{12})+\Tr\omega_{2}\ln(\gamma_{2}+t \omega_{2}).$$
Since the conditional entropy is concave and homogeneous, applying inequality (\ref{eq:derivative}) leads to
the monotonicity of the relative entropy under partial traces (\ref{eq:mono-N-vN2}). 
\end{proof}

The stability bound proved here has obvious consequences for the SSA inequality, and can be used to give a quantitative version of the result \cite{H03} of Hayden, Josza, Petz and Winter.  For another improvement to the SSA inequality, namely
\begin{equation}\label{SSAineq2}
S(\rho_{12})+S(\rho_{23})  - S(\rho_{123})- S(\rho_2) \geq 2\max\{ S(\rho_1) - S(\rho_{13}), S(\rho_3) - S(\rho_{13}) \}\ ,
\end{equation}
see \cite{CL12}.

\vspace{0.3in}
\textbf{Acknowledgments.} The authors are grateful to Mark Wilde and Lin Zhang for comments and questions that have led us to add some reformulations in this version. EAC was partially supported by NSF grant DMS  1501007. AV is grateful to EAC for hosting her visits to Rutgers University, during which this work was partially completed. AV is partially supported by NSF grant DMS 1812734.

\end{document}